\begin{document}

\renewcommand*{\proofname}{\bf Proof}
\newtheorem{theorem}{Theorem}
\newtheorem{claim}{Claim}
\newtheorem{corollary}{Corollary}
\newtheorem{lemma}{Lemma}
\theoremstyle{definition}
\newtheorem{definition}{Definition}
\theoremstyle{remark}
\newtheorem{rem}{\bf Remark}

\sloppy
\righthyphenmin=2
\exhyphenpenalty=10000
\binoppenalty=8000
\relpenalty=8000

\def\q#1.{{\bf #1.}}

\title{Some formulas for the number of gluings}
\author{A.\,V.\,Pastor\thanks{This research was supported by grant of the President of Russian Federation NSh-3229.2012.1; Russian Foundation for Basic Research, grant 11-01-00760-a and grant of Russian government (FCP).}
 \and O.\,P.\,Rodionova}
\date{}
\maketitle

\section{Introduction}

\subsection{Notations}

Consider  $K$ disks $D_1,D_2,\ldots,D_K$. Let the boundary circle of~$D_i$ is also denoted by $D_i$.
Let $2N$ points be marked on the boundary circles $D_1$,\dots, $D_K$ such that at least one point is marked on each circle. 
We fix on the each circle~$D_i$ a  counterclockwise orientation. The marked points divide the circle~$D_i$ into several arcs (which do not contain marked points). One of these arcs  is marked with the number~$i$. 

Thus we have $2N$ arcs on $K$ circles. We split these arcs into pairs and glue together correspondent arcs such that two arcs in each pair are oppositely oriented.  We obtain as a result of gluing a compact orientable surface without boundary. This surface can be disconnected. 
Points marked on the circles and arcs of these circles form a graph drawn on the obtained surface.

In what follows we call   disks  $D_1,D_2,\ldots,D_K$ by  \textit{polygons}. Then  marked points are \textit{vertices}  and arcs of our circles by are \textit{edges} of these polygons. Moreover, let  a  disk with exactly $M$  marked points on its boundary circle be called $M$-gon.  We 
allow polygons with one and two vertices.
For each edge we fix  the {\it first} and the {\it last} vertex (in counterclockwise orientation of their polygon).

\begin{definition}
Let a \textit{map} be an ordered pair $(X,G)$, where $G$ is a graph  (possibly with loops and multiple edges), embedded in a compact orientable surface~$X$ without boundary, such that  connected components of  $X \setminus G$,  called {\it faces} of the map, are  	homeomorphic to discs. 

Two maps~$(X,G)$ and $(X',G')$ are \textit{isomorphic}, if there is an orientation preserving homeomorphism $f: X \to X'$, such that $f(G) = G'$. 

Let the \textit{genus} of the map $(X,G)$ be the genus of the surface~$X$.
\end{definition}

All necessary facts on maps (in particular, about connection between maps and permutations) one can find in~\cite{CM}. We recall some notations, that are necessary for  our paper.

\begin{rem}\label{rem:cycle}
Note, that several nonisomorphic maps  can be correspondent to a graph $G$. We  add to a graph~$G$ an additional construction such that the map correspondent to~$G$ would be unique up to isomorphism.

Let's assign to each edge of the graph $G$ a pair of oppositely oriented arcs and for every vertex put in cyclic order all arcs with the beginning at this vertex, (i.e.\ for every vertex let's set a clockwise order in which outgoing arcs pass out this vertex). It is known that fixing a cyclic order of outgoing arcs for every vertex of the graph~$G$  determine up to isomorphism a map correspondent to~$G$  (see, for examples,~\cite{CM}).
\end{rem}

Let's set a cyclic order of outgoing arcs for every vertex of a graph~$G$.  We obtain as a result a permutation $\sigma$ on the set $A$ of all arcs of the graph~$G$ ($\sigma(e)$ is the next (in the cyclic order) arc outgoing from the beginning of an arc $e$).
We also define  permutations $\iota,\tau\in S(A)$ as follows. Let $\iota(e)$ be the arc opposite to~$e$, i.e.\ correspondent to the same edge of $G$ but oppositely oriented. Let $\tau=\sigma\iota$. Since $\iota^2=1$, we also have  $\sigma=\tau\iota$.

We say that {\it an arc $e$ of a map $(X,G)$ belongs to a face~$F$}, if $e$ belongs to the boundary of~$F$ and is oriented in the direction of counterclockwise walk around $F$.  Note, that the arc~$\tau(e)$ also belongs to the  face $F$ and is the next arc for~$e$ in the counterclockwise walk of the boundary of $F$. Hence we call the arc $\tau(e)$ by the {\it next} arc  for~$e$, and the arc $\tau^{-1}(e)$ by the \textit{previous} arc for~$e$. Let's also note that cycles of the permutations~$\sigma$ correspond to vertices of the map $(X,G)$;  cycles of the permutations~$\iota$ correspond to edges of the map $(X,G)$; and cycles of the permutations~$\tau$ correspond to faces of the map $(X,G)$.

It is easy to see that the procedure of gluing  from $K$ polygons gives us a map  with $N$ edges and $K$ faces. Faces of obtained map correspond to polygons we have glued, so let's enumerate these faces with numbers from 1 to $K$ as well as correspondent polygons. Each edge of the map corresponds to two edges of polygons. Counterclockwise orientations of edges of all  polygons gives us a bijection from the set of edges of polygons to the set of arcs of the map.  Let's mark all arcs of the map correspondent to marked edges of polygons with the same number.

Thus we have  obtained a map with $N$ edges and $K$ faces. Faces  of this map  are marked with integers from 1 to $K$, and  each face has exactly one  arc marked with its number.  We say that such map is \textit{marked}.

We say that marked maps $(X,G)$ and $(X',G')$ are {\it isomorphic}, if there is an isomorphism of maps $f:(X,G)\to(X',G')$, that preserves numbers of faces and marks on arcs of maps. 
It is easy to see that the procedure of gluing described above define a map uniquely up to isomorphism. Moreover, every map can be glued from a collection of polygons uniquely up to isomorphism. Speaking about number of marked maps or about number of gluings of some type we  will    always mean the number of maps or gluings up to isomorphism (i.e.\ the number of classes of isomorphic marked maps).

Similarly, speaking about marked maps of some type we mean classes of isomorphic marked maps of this type.

\subsection{Setting of the problem}

The main object of our research is the number of gluings together   $K$ polygons, that have (together)  $2N$ edges,  into a connected orientable surface of genus $g$.

\begin{rem} This problem have an equivalent formulation. Let's enumerate all arcs of the map with integers from  1 to $2N$ in the following way: at first we enumerate arcs of the face 1 beginning at the arc marked with 1, then we similarly enumerate arcs of the face~2, and so on.
Then the permutations $\iota$ and $\tau$ defined above act on the set~$\{1,2,\ldots,2N\}$. 
Moreover,  $\iota$ consists of  $N$ independent transpositions and $\tau$  consists of  $K$ independent cycles, such that elements of each cycle are consecutive integers. It is  known  (see~\cite{CM}), that such a pair of permutations defines a map up to isomorphism. Note, that  cycles of the permutation~$\sigma=\tau\iota$ correspond to vertices of the map that we construct. Since this map has  $V$ vertices, $N$ edges and $K$ faces, the  genus of a surface on which our map is drawn can be calculated by Euler's formula  $V-N+K=2-2g$. 
Thus our problem can be reduced to a problem of finding the number of pairs of permutations $(\iota,\tau)$ of type described above, such that the  group  $\langle\iota,\tau\rangle$ generated by the permutations $\iota$ and $\tau$   is transitive and $\tau\iota$ is a composition
of $N-K+2-2g$ independent cycles.
\end{rem}

\begin{definition}
We denote by~$\varepsilon_g(N,K)$ the number of ways to glue together $K$ polygons, that have (together)  $2N$ edges,  into  connected orientable surface of genus $g$.

For $N=0$ we set  $\varepsilon_0(0,1)=1$ and $\varepsilon_g(0,K)=0$ if $g+K>1$.
\end{definition}

The number $\varepsilon_g(N,K)$ can be interpreted as the number of marked  maps with $N$ edges and  $K$ faces on a surface of genus~$g$. In this interpretation the number $\varepsilon_0(0,1)$  corresponds to a sphere with single marked vertex.

\begin{rem}
\label{rem:zero}
As it was  mentioned above, the number of vertices of the map is equal to $N-K+2-2g$. Since this number must be positive, for
$N<K+2g-1$ we have $\varepsilon_g(N,K)=0$.
\end{rem}

\bigskip
Let's consider one more object: {\it bicolored  gluings}, defined as follows. Let vertices of each of polygons~$D_1,D_2,\ldots,D_K$ be properly colored  with white and black colors and we can glue together only vertices of the same color. The result of such gluing is a  \textit{bicolored  map} which vertices are properly colored  with white and black colors. The edges of polygons and correspondent arcs of the map, outgoing from white (black) vertices we call {\it white} ({\it black}) respectively. Marked  edge in every polygon must be white. 

Note, that bicolored polygon must have even number of vertices and we glue together edges of different colors.

\begin{definition}
Let \textit{marked bicolored map} be a marked map, which vertices are properly colored with white and black colors, such that all marked arcs are white. 

A bicolored map which marked arcs are not necessarily white (i.e.\ can be white or black) is called \textit{quasimarked}.
\end{definition}

\begin{definition}
We denote by $B_g(N,K)$  the number of ways to glue together $K$ bicolored polygons, that have (together)  $2N$ edges,  into  connected orientable surface of genus $g$.

For $N=0$ we set  $B_0(0,1)=1$ and $B_g(0,K)=0$ if $g+K>1$.
\end{definition}

The number $B_g(N,K)$ can be interpreted as the number of marked bicolored  maps with $N$ edges and  $K$ faces on a surface of genus~$g$.

\begin{rem}
\label{r2}
Note, that there is a natural bijection from  the set of quasimarked bicolored maps where the arc marked with the number~$i$ is black to the set of quasimarked bicolored maps where this arc is white: we mark with~$i$ the white arc that is the next to the  marked black arc. The inverse bijection is similar.

Thus, the number of quasimarked maps with $N$ edges and $K$ faces is equal to~$2^KB_g(N,K)$.
\end{rem}

Gluings a surface of genus~$g$ from one polygon were for the first time considered by J.\,Harer and D.\,Zagier~\cite{HZ}.  In this paper the numbers of such gluings (denoted by $\varepsilon_g(N)$) were used to  calculate Euler's characteristic of moduli space. The following recursion for the numbers $\varepsilon_g(N)$ was proved in~\cite{HZ}:
\begin{equation}
  \varepsilon_g(N)=\frac{2N-1}{N+1}(2\varepsilon_g(N-1)+(N-1)(2N-3)\varepsilon_{g-1}(N-2)).
  \label{eq:eps}
\end{equation} 
(We set  $\varepsilon_0(0)=1$.)
Many proofs of formula~\eqref{eq:eps} are known now. In the paper~\cite{GN} a bijective proof for this formula was given.

The recursion for the numbers of bicolored gluings of a surface of genus $g$ from one polygon (denoted by $B_g(N)$) is similar to~\eqref{eq:eps}. It was obtained in the papers~\cite{Jac} and~\cite{Adr} independently:
\begin{equation}
B_g(N)=\frac{1}{N+1}(2(2N-1)B_g(N-1)+(N-2)(N-1)^2 B_{g-1}(N-2)).
\label{eq:B}
\end{equation}                       
(We set $B_0(0)=1$.)  A bijective proof of the formula~\eqref{eq:B} was obtained in~\cite{SV}.

One can find explicit formulas for the numbers~$\varepsilon_g(N)$ and~$B_g(N)$ for small~$g$ in the papers~\cite{GSh} and~\cite{Adr}. We write down two explicit formulas we need in what follows:

\begin{equation}
\varepsilon_0(N)=B_0(N)=\frac{(2N)!}{N!(N+1)!};
\label{eq:eps_B_0}
\end{equation} 

\begin{equation}
\varepsilon_1(N)=\frac{(2N)!}{12N!(N-2)!}.
\label{eq:eps_1}
\end{equation} 

Gluings of a surface of genus~$g$ from two polygons were considered in the papers~\cite{GS} and~\cite{APRW}.
In~\cite{GS} the generating function for the  numbers  of gluings of a surface of genus~$g$ from a $p$-gon and a $q$-gon was obtained, but it was tedious. However, no recurrence or explicit formula for $\varepsilon_g(N,2)$ was obtained in this paper. A more simple  formula was presented in~\cite{APRW}: it was proved that for every $g\ge0$
   $$
   \sum_{N\ge0}\varepsilon_g(N,2)z^N=\frac{P_g^{[2]}(z)}{(1-4z)^{3g+2}},
   $$
where $P_g^{[2]}(z)$ is a  polynomial of degree at most~$3g+1$. A recursive method for calculating this polynomial was presented. Moreover, explicit formulas for the numbers $\varepsilon_0(N,2)$, $\varepsilon_1(N,2)$ and $\varepsilon_2(N,2)$ were proved in~\cite{APRW}:

\begin{equation}
\varepsilon_0(N,2)=N4^{N-1};
\label{g0}
\end{equation} 

\begin{equation}
\varepsilon_1(N,2)={1\over12}(13N+3)N(N-1)(N-2)4^{N-3};
\label{g1}
\end{equation} 

\begin{equation}
\varepsilon_2(N,2)={1\over180}(445N^2-401N-210)N(N-1)(N-2)(N-3)(N-4)4^{N-6}.
\label{g2}
\end{equation}

In this paper we give elementary proofs of formulas~\eqref{g0} and~\eqref{g1}. We  obtain explicit formulas for the numbers~$B_0(N,2)$ and~$\varepsilon _0(N,3)$ (see theorems~\ref{th:B_0_2_form} and~\ref{th:eps_0_3}).

\subsection{An operation of deleting an edge}

All formulas proved in our paper are proved by induction. The plan of these proofs is as follows.
We consider an arbitrary marked connected map of genus~$g$  with  $N>1$ edges and $K$ faces. After  deleting  the edge marked with 1 from this map  we obtain a connected map or a pair of connected maps with less number of edges. This help us to obtain a recurrence  equation for the numbers $\varepsilon_g(N,K)$. After that we by induction prove the explicit formula.

Consider the {\it operation of deleting an edge} in details. Let $(X,G)$ be a marked map,  where $X$ is a connected surface of genus~$g$ and $G$ is a graph with $V$ vertices and $N>1$ edges. Let this map be glued from polygons~$D_1,D_2,\ldots,D_K$, with $m_1,m_2,\ldots,m_K$ edges respectively.  Then  $m_1+m_2+\ldots+m_K=2N$. Denote by~$e_i$ the arc marked with~$i$, by~$e_i'$ the arc opposite to~$e_i$ and by~$\tilde e_i$ the edge of the graph~$G$ correspondent to these two arcs. 
Let's delete from~$G$ the edge~$\tilde e_1$. For every vertex of obtained graph~$G_1$ we put in cyclic order all outgoing arcs as in the map~$(X,G)$ (we simply exclude all deleted arcs from the cyclic order). As it was mentioned in remark~\ref{rem:cycle}, there exists a unique  up to isomorphism map  $(X_1,G_1)$ correspondent to the graph~$G_1$  with fixed cyclic order of arcs. If the surfaces $X$ and $X_1$ are homeomorphic then we assume without loss of generality that $X_1=X$. However, these surfaces can be not homeomorphic: for example, if the graph~$G_1$ is disconnected, then, clearly, the surface $X_1$ is disconnected too.

To make the map $(X_1,G_1)$ marked, we must enumerate the faces and mark some  arcs. Consider the following 4 cases:
\begin{enumerate}
 \item \textit{the arcs  $e_1$ and $e_1'$ belong to different faces};
 \item \textit{$e_1$ and $e_1'$ are successive arcs of the face number~$1$};
 \item \textit{ $e_1$ and $e_1'$ are unsuccessive arcs of the face number~$1$,  the graph~$G_1$ is connected};
 \item \textit{$e_1$ and $e_1'$ are unsuccessive arcs of the face number~$1$,  the graph~$G_1$ is disconnected}.
\end{enumerate}
For each of these cases we describe the obtained map or pair of maps, and count the number of ways to obtain each map or pair of maps as a result of applying the operation to all maps satisfying the conditions of the case we consider.

Let's begin the case analysis.

\smallskip \goodbreak
\q1. \textit{The arcs  $e_1$ and $e_1'$ belong to different faces.} 

Let~$e_1'$  belongs to the face number~$j\ne1$. Then after deleting the edge $\tilde e_1$ the faces~1 and~$j$ become glued together. Hence we obtain a map $(X,G_1)$ of genus $g$ with~$N-1$ edges and $K-1$ faces. 
To make this map marked we assign the number~1 to the face appeared as a result of gluing together faces~1 and~j. Then we enumerate all other faces with numbers from 2 to $K-1$ in  increasing order of their numbers in the map~$(X,G)$ and correct the correspondent marks on arcs. It remains to mark with~1 an  arc  of the map $(X,G_1)$. 
If $e_j\ne e_1'$, then we mark with~1 the arc~$e_j$. If $e_j=e_1'$ and $m_1>1$, then we mark with~1 the arc, next to~$e_1$ in the map~$(X,G)$. 
Otherwise~$e_j=e_1'$ and $m_1=1$, then we mark with~1 the arc next to~$e_j$ in the map~$(X,G)$ (see figure~\ref{ris1}). Note, that in the last case we have~$m_j>1$, since the  map $(X,G)$ is connected and~$N>1$. We obtain as a result a connected marked map of genus  $g$ with  $N-1$ edges and $K-1$ faces.  Note, that the face number~1 of this map contains~$m_1+m_j-2$ arcs.

\noindent
\begin{figure}[!hb]
\centerline{\includegraphics[width=.96\columnwidth, keepaspectratio]{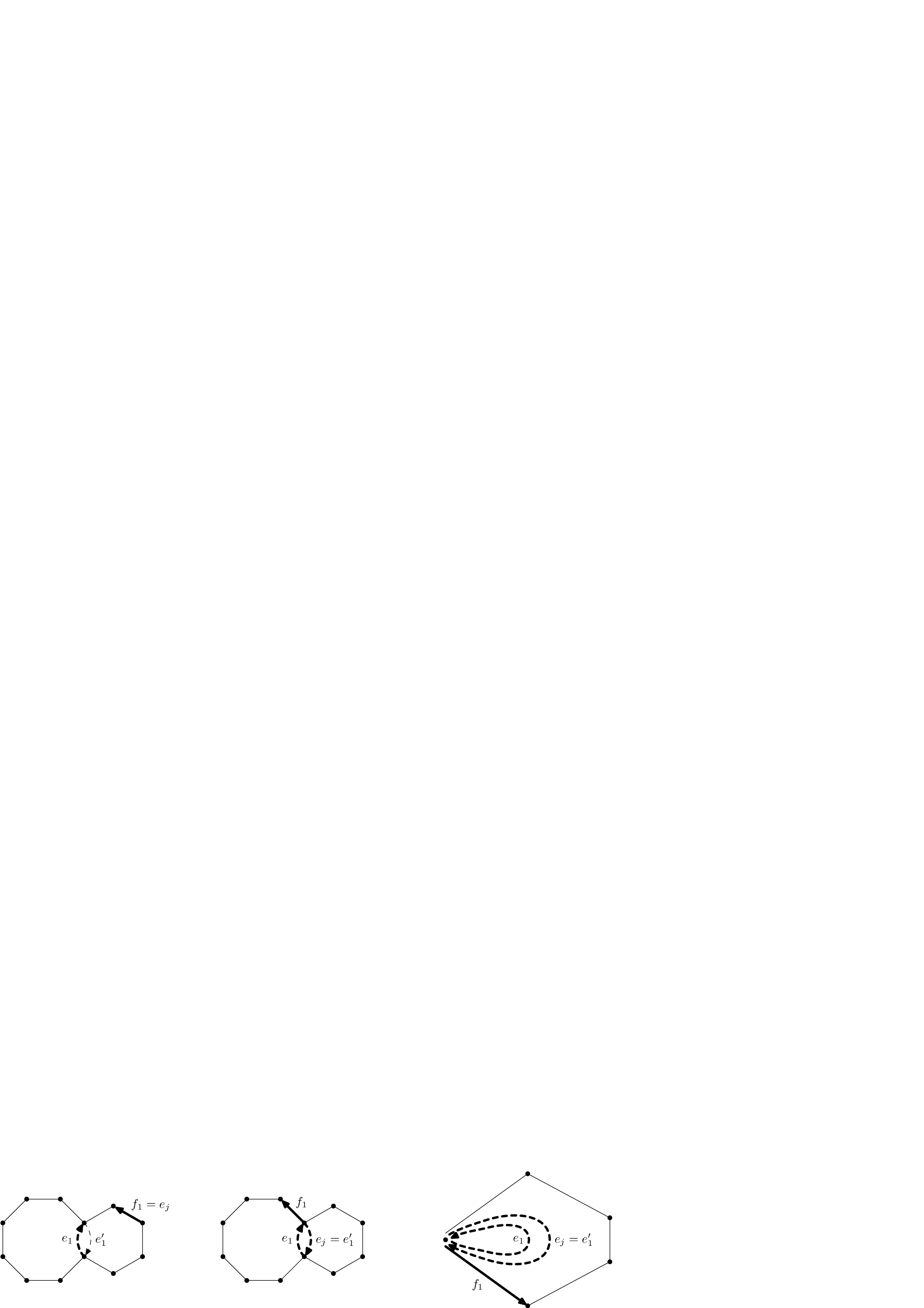}}
\caption{Operation of deleting an edge, case \textbf{1}. In each subcase $f_1$ is the arc that will be marked with~1 in the obtained map.}
\label{ris1}
\end{figure}

\begin{lemma}
\label{l1_1} Let's apply the operation of deleting an edge to all marked maps satisfying the conditions of case~\textbf{\em 1}. 
Then every marked map of genus~$g$ with~$N-1$ edges  and $K-1$ faces, such that face number~$1$ of this map contains~$M$ arcs, is obtained  exactly  $\frac{(M+1)(M+2)(K-1)}{2}$ times.
\end{lemma}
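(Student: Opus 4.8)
The plan is to prove the lemma by counting preimages. For a fixed target marked map $(Y,H)$ of genus~$g$ with $N-1$ edges and $K-1$ faces whose face number~$1$ has $M$ arcs, I will count how many maps $(X,G)$ satisfying the conditions of case~\textbf{1} are sent to $(Y,H)$ by the operation, and show this number equals $\frac{(M+1)(M+2)(K-1)}{2}$ regardless of the particular $(Y,H)$. Since the preceding discussion of case~\textbf{1} already establishes that the operation lands in exactly this class of maps (connected, genus~$g$, with $N-1$ edges, $K-1$ faces, and merged face~$1$ carrying $m_1+m_j-2$ arcs), it suffices to invert the operation and enumerate.

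Reconstructing a preimage splits into two independent choices. First I recover the original index $j$ of the face merged into face~$1$: the operation relabels the faces $2,\dots,K$ of $(X,G)$, skipping $j$, into the faces $2,\dots,K-1$ of $(Y,H)$, so choosing $j\in\{2,\dots,K\}$ fixes the relabelling uniquely and contributes a factor $K-1$. Second I must reinsert the deleted edge $\tilde e_1$ so as to split face~$1$ of $(Y,H)$ into two faces. Writing the boundary walk of face~$1$ as the cyclic arc sequence $b_0,b_1,\dots,b_{M-1}$ with $b_0$ the arc marked~$1$, I will check, by reversing the description of the merged face, that such a split is precisely the datum of inserting the two new arcs $e_1,e_1'$ into the gaps of this cyclic walk, together with the choice of the marked arc $e_j$ of the recreated face~$j$.

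The heart of the proof is to count these insertions compatibly with the marking rule, and here the three re-marking subcases of case~\textbf{1} must be matched in reverse. If the recreated marked arc satisfies $e_j\neq e_1'$, the forward rule forces $e_j=b_0$, so $b_0$ must lie in the face-$j$ block; letting $p=m_1-1$ range over $0,\dots,M$ and counting the rotational positions of a block of size $M-p$ that contains $b_0$ gives $\sum_{p=0}^{M}(M-p)=\frac{M(M+1)}{2}$ reconstructions. If instead $e_j=e_1'$, the rule forces $b_0=\tau(e_1)$ when $m_1>1$ and $b_0=\tau(e_1')$ when $m_1=1$; in either case $b_0$ pins the block down completely, yielding one reconstruction for each $p\in\{1,\dots,M\}$ and one more for $p=0$, i.e.\ $M+1$ further reconstructions. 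Adding the contributions gives $\frac{M(M+1)}{2}+(M+1)=\frac{(M+1)(M+2)}{2}$ splits for each~$j$, hence $\frac{(M+1)(M+2)(K-1)}{2}$ preimages in all.

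The main obstacle is exactly this marking bookkeeping: I must verify that the three subcases are inverted with no overlap and no omission, and that the degenerate configurations in which one recreated face is a monogon ($m_1=1$ or $m_j=1$) are counted correctly, since these are the boundary terms $p=0$ and $p=M$ where a naive ``choose two gaps'' count would go wrong. I will also check that every reconstruction produces a genuine case~\textbf{1} preimage: drawing an arc inside a face, which is a disc, between two of its corners always cuts it into two discs, so the surface, its connectedness, and its genus are preserved, and $e_1,e_1'$ do end up in two distinct faces as required.
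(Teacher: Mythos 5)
Your proof is correct and follows essentially the same route as the paper's: invert the deletion, pick up a factor $K-1$ from the choice of the original index $j$ of the face merged into face~1, and split the edge-plus-marking count into the two sub-cases $e_j\neq e_1'$ (giving $\frac{M(M+1)}{2}$) and $e_j=e_1'$ (giving $M+1$). The only difference is bookkeeping: you enumerate insertions by block size $p$ and rotational position, whereas the paper enumerates unordered pairs of vertices of the $M$-gon (with the $e_j=e_1'$ case tied to pairs containing the beginning $w$ of the marked edge), and the two tallies agree term by term.
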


\begin{proof}
Let $(X,G_1)$ be a marked map of genus $g$ with $N-1$ edges and $K-1$ faces. Let $F_1$ be the face number~1 and $D_1$ be the $M$-gon, correspondent to the face~$F_1$. Denote by $f_1$ the arc marked with~1 and by $w$ the beginning of the marked edge of~$D_1$. 
This map was obtained by deleting an edge~$\tilde e_1$ of an initial map, this edge was drawn inside the face~$F_1$. Hence the edge~$\tilde e_1$ is uniquely up to isomorphism defined by an unordered pair $\{a,b\}$ of vertices of the polygon~$D_1$ (the vertices in this pair can coincide). The number of such pairs is equal to~$\frac{M(M+1)}{2}$.

Let's reconstruct  marks on edges of the initial map. At first note, that the face~$F_1$ can be glued from the faces number~1 and number~$j$ of the initial map, where the number~$j$ can be chosen in $K-1$ ways. When we choose some~$j$, we must to point out which part of $F_1$ will have number~1 and which one will have number~$j$. Then we must choose marked arcs in two new faces (the numbers of all other faces and their marked arcs are reconstructed uniquely). Note, that it's enough to choose the arc $e_j$: then we set the enumeration of parts uniquely and the added arc lying in the face number~1 will be marked with~1.

There are two possible cases: $e_j=f_1$ or $e_j=e_1'$ (see the description of the operation). The first case is possible for any edge~$\tilde e_1$, i.e.\ occurs  $\frac{M(M+1)}{2}$ times. The second case is possible only if one of the vertices~$a$ and~$b$ coincides with~$w$. (If exactly one of the vertices~$a,b$ coincides with $w$, then the direction of~$e_j$ is defined uniquely. If both vertices $a$ and $b$ coincide with $w$ then the direction of $e_j$ can be set in two ways.) Thus, the second case occurs~$M+1$ times. Hence, for each~$j$ we have $\frac{(M+1)(M+2)}{2}$ ways, and totally $\frac{(M+1)(M+2)(K-1)}{2}$ ways.
\end{proof}

\smallskip
\q2. \textit{$e_1$ and $e_1'$ are successive arcs of the face number~$1$.}

Since $(X,G)$ is connected and~$N>1$, in this case we have~$m_1>2$, and, consequently, exactly one end of the edge~$\tilde e_1$ has degree~1. 
Denote this end by~$u$. Then after deleting the edge~$\tilde e_1$ and the vertex~$u$ we obtain a map~$(X,G_1')$ with $N-1$ edges and $K$ faces. We preserve the enumeration of faces and marks on all arcs with exception of the deleted arc~$e_1$. Then we mark with~1  the arc of $(X,G)$ which is the next to those of arcs $e_1$ and $e_1'$, that  begins at~$u$ (see figure~\ref{ris2}). As a result we obtain a marked connected map of genus~$g$  with $V-1$ vertices and $K$ faces.

\noindent
\begin{figure}[!hb]
\centerline{\includegraphics{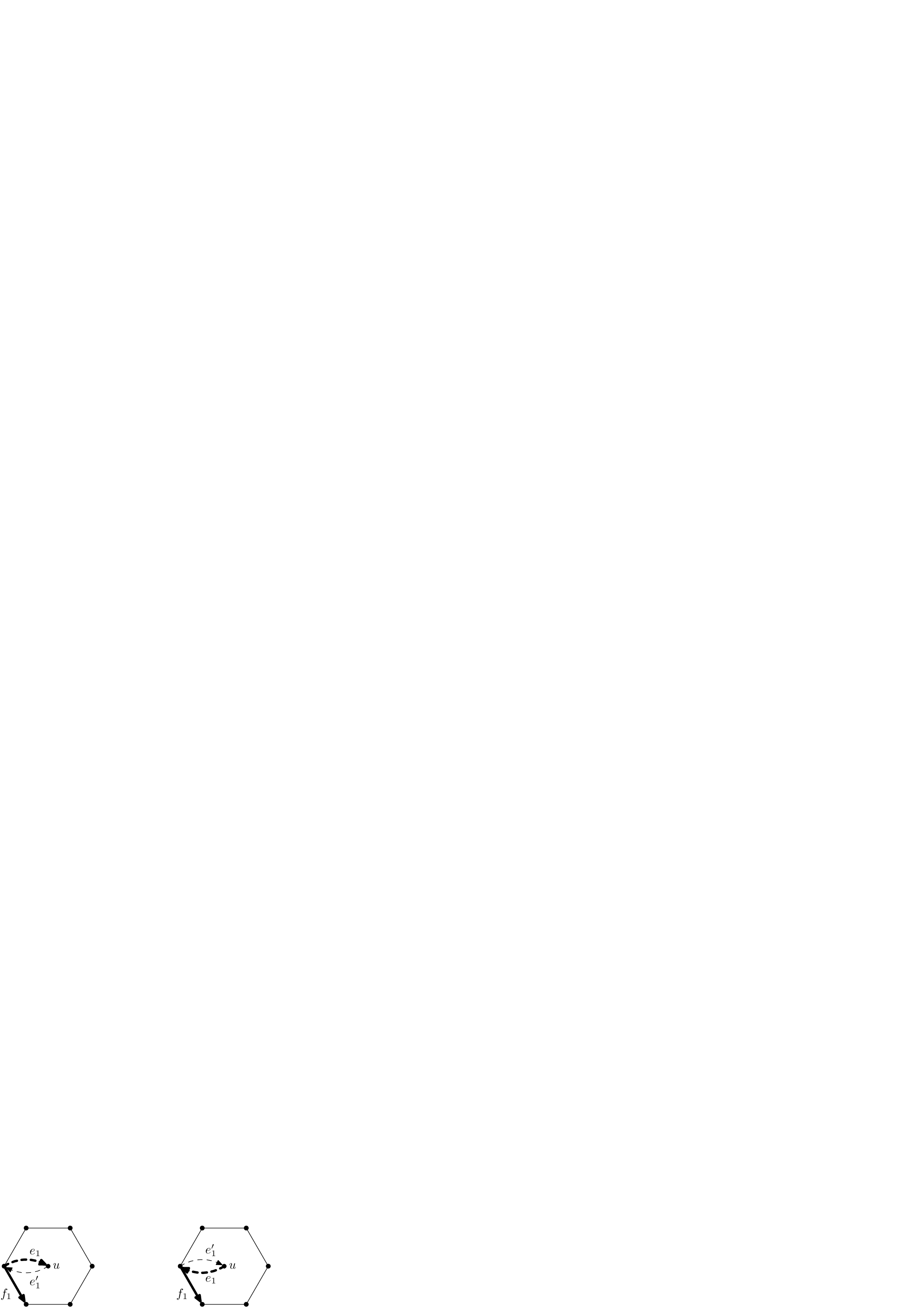}}
\caption{The operation of deleting an edge, case \textbf{2}. Disposition of the arcs~$e_1$ and~$e_1'$.}
\label{ris2}
\end{figure}

\begin{lemma}
\label{l1_2}
 Let's apply the operation of deleting an edge to all marked maps satisfying the conditions of case~\textbf{\em 2}. 
Then every marked map of genus~$g$ with~$N-1$ edges  and $K$ faces is obtained  exactly twice.
\end{lemma}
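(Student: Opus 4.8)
The plan is to analyze the inverse of the deleting operation. Fix a marked connected map $M'$ of genus~$g$ with $N-1$ edges and $K$ faces, let $f_1$ be the arc of $M'$ marked with~$1$, and let $w$ be its initial vertex. I would show that the Case~\textbf{2} maps whose image under the operation equals $M'$ are exactly those obtained from $M'$ by gluing in a single pendant edge inside face~$1$. Indeed, in Case~\textbf{2} the marked edge $\tilde e_1$ is pendant (one endpoint of degree~$1$, as established just before the statement) and both of its arcs lie on the boundary of face~$1$; so reversing the operation amounts to choosing a corner of face~$1$ at which to attach a new pendant edge, declaring one of its two arcs to be the new~$e_1$, and leaving every other face number and every other mark untouched.

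First I would show that the corner is forced. Suppose a pendant edge with new leaf~$u$ is inserted into face~$1$ of $M'$ between two consecutive boundary arcs $p$ and $q$. Then in the resulting map the counterclockwise walk of face~$1$ reads $\ldots,p,o,r,q,\ldots$, where $o$ and $r=\iota(o)$ are the two arcs of the pendant edge and $r$ is the unique one of the two that begins at the leaf~$u$. Deleting this edge and re-marking by the rule stated in the description of the operation marks the arc $\tau(r)=q$, \emph{independently of which of $o,r$ was chosen as $e_1$}. Hence the reconstructed map returns to $M'$ with its marked arc equal to $f_1$ if and only if $q=f_1$; that is, the pendant must be attached in the corner immediately preceding $f_1$, at the vertex~$w$. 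This pins down the attachment completely, and the only remaining freedom is the choice of which of the two arcs $o,r$ is declared to be $e_1$. These two choices are the dispositions drawn in figure~\ref{ris2} and account for the factor of~$2$.

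It remains to check that the two reconstructed marked maps are non-isomorphic, and that each is a legitimate Case~\textbf{2} preimage. Adding a pendant edge keeps the surface connected and changes neither $V-E+F$ nor, therefore, the genus; the new face~$1$ gains two arcs, so it has at least three arcs, one endpoint of the added edge has degree~$1$, and its two arcs are successive on face~$1$ — precisely the conditions of Case~\textbf{2}. For non-isomorphism I would argue that an orientation-preserving isomorphism of marked maps must send the arc marked~$1$ of one map to the arc marked~$1$ of the other, i.e.\ send $o\mapsto r$; such a map would have to interchange the leaf~$u$ (degree~$1$) with the attachment vertex~$w$, which already carries the edge $f_1$ of $M'$ and hence has degree at least~$2$. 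This is impossible, so the two preimages are genuinely distinct, and we conclude that every target is obtained exactly twice. I expect this last step — ruling out a coincidental automorphism, which is the delicate point when $K=1$ and the marks on other faces are unavailable to rigidify the map — to be the main obstacle; it is exactly what the degree comparison between $u$ and $w$ resolves.
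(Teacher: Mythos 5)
Your proposal is correct and follows essentially the same route as the paper's proof: invert the operation, note that the preimage must be obtained by attaching a pendant edge (uniquely up to isomorphism) in the corner of face~$1$ immediately preceding $f_1$, and that the only remaining freedom is which of the two arcs of that edge is marked with~$1$, giving exactly two preimages. The only addition is your explicit check that the two resulting marked maps are non-isomorphic (the degree comparison between the leaf $u$ and the attachment vertex $w$), a point the paper's proof leaves implicit but which is indeed the one place where care is needed when $K=1$.
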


\begin{proof}
Let $(X,G_1')$ be the marked map of genus $g$ with $N-1$ edges and  $K$ faces; $F_1$ be the face number 1 in this map and $v$ be the beginning of the arc~$f_1$ marked with~1 in this map.  This map can be obtained as a result of deleting an edge~$uv$, such that the vertex~$u$ is inside the face~$F_1$ and the arc $f_1$ was previous arc to~$uv$ before deleting this edge. The edge~$uv$ can be drawn uniquely up to isomorphism, all numbers of faces and arcs marked with numbers more than 1 are preserved. The only non-uniqueness is as follows: we can mark with~1 either $uv$ or~$vu$. Hence we have two variants.
\end{proof}

\smallskip
\q3. \textit{$e_1$ and $e_1'$ are unsuccessive arcs of the face number~$1$,  the graph~$G_1$ is connected.} 

Consider the permutations $\sigma$, $\iota$, $\tau$ on the set of arcs of the map~$(X,G)$ defined above. (The permutation $\sigma$ defines the cyclic  order of outgoing arcs for every vertex; $\iota$ for any arc gives  the opposite arc; $\tau=\sigma\iota$ for any arc gives  the next one).
Let's consider the similar permutations $\sigma_1$, $\iota_1$, $\tau_1=\sigma_1\iota_1$ on the set of arcs of the map~$(X_1,G_1)$. As it was mentioned above, the permutation~$\sigma_1$ is the result of excluding the arcs $e_1$ and $e_1'$ from $\sigma$ and  $\iota_1$  is the result of excluding the cycle $(e_1,e_1')$ from $\iota$. Whence it follows, that
   $$\tau_1(e)=\left\{
     \begin{array}{lr}
      \tau(e),&\tau(e)\not\in\{e_1,e_1'\};\\
      \tau(e_1'),&\tau(e)=e_1;\\
      \tau(e_1),&\tau(e)=e_1'.
     \end{array}
     \right.
    $$
Thus in this case the face 1 of the map $(X,G)$ corresponds to two faces of the map $(X_1,G_1)$.
One of these faces consists of all arcs, lying on the way  from~$e_1$ to~$e_1'$ in the face 1 of  $(X,G)$, and the other face consists of all arcs, lying on the way  from~$e'_1$ to~$e_1$  of the face 1 of  $(X,G)$.  Let's enumerate the first face with~1 and the  second face with~$K+1$ and mark in these faces the arcs, next to~$e_1$ and $e_1'$ respectively (see figure~\ref{ris3}a). We obtain a connected marked map with $N-1$ edges and $K+1$ faces. Since this map~$(X_1,G_1)$ contains~$V$ vertices, by Euler's formula it has genus~$g-1$.

\noindent
\begin{figure}[!hb]
\centerline{\includegraphics[width=.96\columnwidth, keepaspectratio]{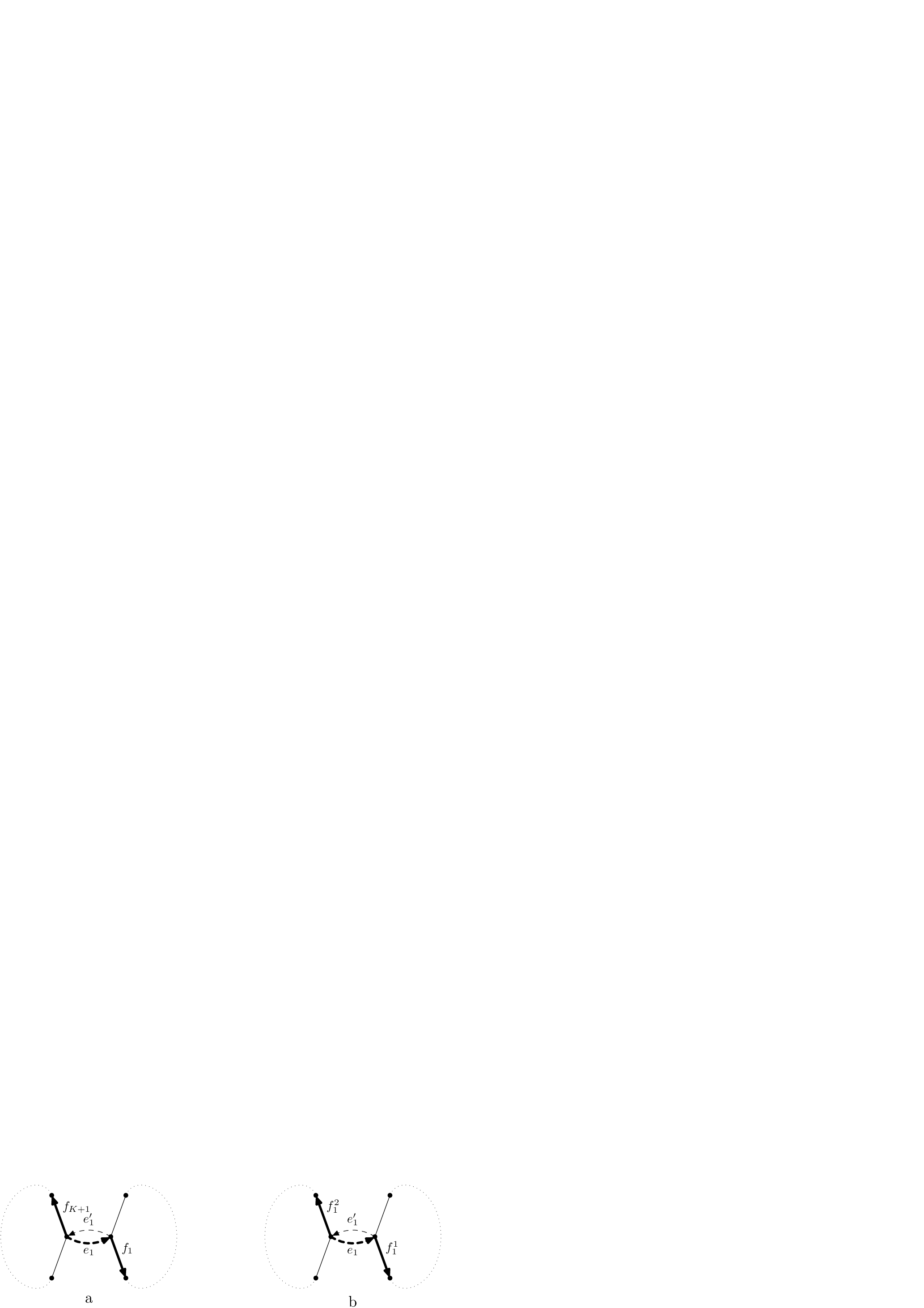}}
\caption{The operation of deleting an edge, cases \textbf{3} and \textbf{4}. In case \textbf{3} (fig.~\ref{ris3}a) $f_1$ and $f_{K+1}$  are the arcs we mark with~$1$ and $K+1$, respectively. In case \textbf{4} (fig.~\ref{ris3}b) $f_1^1$ and $f_{2}^1$  are the arcs we mark with~$1$ in the maps $(X_1^1,G_1^1)$ and $(X_1^2,G_1^2)$,  respectively.}
\label{ris3}
\end{figure}

\begin{rem}
In this case we delete an edge drawn on one of handles of the surface~$X$. After that one connected component of the set~$X \setminus G_1$ becomes homeomorphic to a cylinder. We cut off this cylinder and replace it by two disks correspondent to its bases. As a result we obtain the map  $(X_1,G_1)$.
\end{rem}

\begin{lemma}
\label{l1_3}
 Let's apply the operation of deleting an edge to all marked maps satisfying the conditions of case~\textbf{\em 3}. 
Then every marked map of genus~$g-1$ with~$N-1$ edges  and $K+1$ faces is obtained  exactly once.
\end{lemma}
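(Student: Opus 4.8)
The plan is to exhibit an inverse to the operation: given a marked map $(X_1,G_1)$ of genus $g-1$ with $N-1$ edges and $K+1$ faces, I will reconstruct a unique marked map satisfying the conditions of case \textbf{3} that produces it, and conclude that every such target map is obtained exactly once. Unlike in cases \textbf{1} and \textbf{2}, I expect the reconstruction to involve no free choices, which is exactly why the multiplicity here is one rather than $\frac{(M+1)(M+2)(K-1)}{2}$ or $2$.

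First I would recover the attaching data that the forward operation records. By the description of case \textbf{3}, the marked arcs $f_1$ and $f_{K+1}$ of faces $1$ and $K+1$ satisfy $f_1=\tau(e_1)$ and $f_{K+1}=\tau(e_1')$ in the original map. Hence the end of $e_1$ is the beginning $v_1$ of $f_1$, and the end of $e_1'$ is the beginning $v_2$ of $f_{K+1}$; since $e_1$ and $e_1'$ are opposite arcs, the edge $\tilde e_1$ to be restored joins $v_1$ and $v_2$, with $e_1$ directed from $v_2$ to $v_1$ and $e_1'$ from $v_1$ to $v_2$. Both $v_1$ and $v_2$ are determined by $(X_1,G_1)$, so the placement of the new edge is forced. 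Next I would fix the cyclic order, i.e.\ the permutation $\sigma$, at $v_1$ and $v_2$: the relation $\tau(e_1)=f_1$ forces $\sigma(e_1')=f_1$, so $e_1'$ must be inserted immediately before $f_1$ at $v_1$, and symmetrically $e_1$ is inserted immediately before $f_{K+1}$ at $v_2$. This determines $\sigma$ completely; I then merge faces $1$ and $K+1$ back into a single face numbered $1$, keep all other face numbers, and mark $e_1$ with $1$. Since every step is dictated by $(X_1,G_1)$, the preimage is unique.

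The only genuine obstacle is verifying, via the explicit formula for $\tau_1$ stated before the lemma, that this reconstruction indeed lands in case \textbf{3}. I would track the face walks directly: writing $g_0=\tau_1^{-1}(f_1)$ and $h=\tau_1^{-1}(f_{K+1})$, the modified cyclic order at $v_1$ and $v_2$ yields $\tau(g_0)=e_1'$ and $\tau(h)=e_1$, so the walk $e_1,f_1,\dots,g_0,e_1',f_{K+1},\dots,h$ closes into a single $\tau$-cycle. This cycle contains both $e_1$ and $e_1'$, separated by the two nonempty subwalks, so they are unsuccessive; thus the two faces of $(X_1,G_1)$ are fused into one face of the reconstructed map. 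Consequently the reconstructed map has $V$ vertices, $N$ edges and $K$ faces, and Euler's formula gives genus $g$; since $G_1$ is connected, adding $\tilde e_1$ keeps the graph connected, so we are in case \textbf{3} and not in case \textbf{4}. Finally, applying the forward operation to this map returns $(X_1,G_1)$, so the operation restricted to case \textbf{3} is a bijection onto the set of marked maps of genus $g-1$ with $N-1$ edges and $K+1$ faces, which is precisely the assertion that each such map is obtained exactly once.
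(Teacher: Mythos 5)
Your proof is correct and takes essentially the same approach as the paper's: both reconstruct the unique preimage by joining the beginnings $v_1$ and $v_{K+1}$ of the marked arcs $f_1$ and $f_{K+1}$ with a new edge, inserting the added arcs immediately before $f_1$ and $f_{K+1}$ in the respective cyclic orders, and marking with~1 the arc leading from $v_{K+1}$ to $v_1$. Your extra verification via the $\tau$-cycle computation that the reconstructed map really falls under case~\textbf{3} (arcs in one face, unsuccessive, graph connected) is a detail the paper leaves implicit, but it does not change the argument.
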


\begin{proof}
Let $(X_1,G_1)$ be a marked map of genus $g-1$ with $N-1$ edges and $K+1$ faces. Denote by $f_i$ the arc marked with~$i$ and by~$v_i$ its beginning vertex. Let's show that initial marked map $(X,G)$ can be reconstructed uniquely. Note, that the graph~$G$ is a result of adding to $G_1$ the edge~$v_1v_{K+1}$. The cyclic order of outgoing arcs for each vertex is reconstructed uniquely: for a vertex $v_i$ (where $i\in\{1,K+1\}$) the added arc is disposed just before~$f_i$, for all other vertices the cyclic order is preserved. Thus the map $(X,G)$ is reconstructed uniquely.
The numbers of all faces with exception of~1 and~$K+1$ and their marked arcs are preserved too. The arc marked with~1 is also reconstructed uniquely: it leads from~$v_{K+1}$ to $v_1$.
\end{proof}

\smallskip
\q4. \textit{$e_1$ and $e_1'$ are unsuccessive arcs of the face number~$1$,  the graph~$G_1$ is disconnected.} 

In this case the map~$(X_1,G_1)$ consists of two connected components. As in previous case, the face number 1  of the map $(X,G)$ corresponds to two faces of the map $(X_1,G_1)$.
One of these faces consists of all arcs, lying on the way  from~$e_1$ to~$e_1'$ in the face 1 of  $(X,G)$, and the other face consists of all arcs, lying on the way  from~$e'_1$ to~$e_1$  in the face 1 of  $(X,G)$.  Note, that these two faces belong to different connected components of the map~$(X_1,G_1)$. Denote these components by $(X_1^1,G_1^1)$ and $(X_1^2,G_1^2)$ respectively. In both components we enumerate with~1 the face obtained from the face~1 of~$(X,G)$. As in previous case,  we mark  in these faces the arcs, next to~$e_1$ and~$e_1'$ respectively (see figure~\ref{ris3}b). All other faces of each of maps $(X_1^1,G_1^1)$ and $(X_1^2,G_1^2)$ we enumerate in increasing order of their numbers in the map~$(X,G)$ and correct the correspondent marks on arcs. As a result we obtain an ordered pair of marked maps, which have together $V$ vertices, $N-1$ edges and $K+1$ faces. Each of this two maps contains at least one edge.
Let the map  $(X_1^1,G_1^1)$ have genus~$g_1$, and the map  $(X_1^2,G_1^2)$ have genus~$g_2$. By Euler's formula we have~$g_1+g_2=g$.

\begin{rem}
As in previous case, one of the connected components of the set~$X \setminus G_1$ is homeomorphic to a cylinder. However, in the case we consider after replacing this cylinder by two disks correspondent to its bases, the surface becomes disconnected.
\end{rem}

\begin{lemma}
\label{l1_4}
 Let's apply the operation of deleting an edge to all marked maps satisfying the conditions of case~\textbf{\em 4}. 
Consider ordered  pairs of marked  maps with positive number of edges in each map, such that these maps contain together~$N-1$ edges  and $K+1$ faces and have sum of the genera~$g$. Then every such ordered pair   is obtained   exactly  $C_{K-1}^{K_1-1}=C_{K-1}^{K_2-1}$ times, where~$K_1$ is the number of faces of the first  map and  $K_2$ is the number of faces of the second  map.
\end{lemma}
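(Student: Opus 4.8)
The plan is to mimic the reconstruction argument of Lemma~\ref{l1_3}, the only new feature being the freedom to distribute the face numbers $2,\ldots,K$ between the two components.

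First I would fix an ordered pair of marked maps $(X_1^1,G_1^1)$, $(X_1^2,G_1^2)$ satisfying the hypotheses, with $K_1$ and $K_2=K+1-K_1$ faces respectively, so that $(K_1-1)+(K_2-1)=K-1$. Denote by $v_1^1$ and $v_1^2$ the beginnings of the arcs $f_1^1$, $f_1^2$ marked with~$1$ in the two maps. Exactly as in case~\textbf{3}, the underlying abstract map $(X,G)$ is forced: the graph $G$ is obtained by adding a single edge joining $v_1^1$ and $v_1^2$, the cyclic order at every vertex is inherited (the new arc being inserted just before $f_1^1$, resp.\ $f_1^2$, at its endpoint), and the position of the arc that must be marked with~$1$ in $(X,G)$ is determined (it runs from $v_1^2$ to $v_1^1$). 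Adding this edge merges the two faces numbered~$1$ into a single face, which becomes the face number~$1$ of $(X,G)$. Since the pair is \emph{ordered}, the rule associating the first map with $e_1$ and the second with $e_1'$ removes any ambiguity between the two arcs of the new edge, so no factor of~$2$ appears here. Thus everything is determined up to the labelling of the remaining faces.

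Next I would analyse precisely this labelling freedom. In the forward operation the faces of $(X,G)$ other than the face number~$1$ are split between the two components and then renumbered within each component \emph{in increasing order of their original numbers}; hence the relative order of the original face numbers is preserved inside each component. Reconstructing the original numbering therefore amounts to assigning the set $\{2,3,\ldots,K\}$ to the $K-1$ non-unit faces so that the order within the first map's faces and the order within the second map's faces are both respected --- that is, to choosing an interleaving (shuffle) of two ordered lists of lengths $K_1-1$ and $K_2-1$. The number of such interleavings equals the number of ways to select which $K_1-1$ of the $K-1$ available numbers go to the first component, namely $C_{K-1}^{K_1-1}=C_{K-1}^{K_2-1}$.

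Finally I would verify that this correspondence is exact: each interleaving produces a marked map $(X,G)$ of genus~$g$ (the genus being forced by Euler's formula, as already recorded in the discussion of case~\textbf{4}) that satisfies the conditions of case~\textbf{4} and whose deletion operation returns the given ordered pair in the given order; and distinct interleavings yield non-isomorphic marked maps, since their face numberings differ. The main (if modest) obstacle is exactly this last check: confirming that every interleaving genuinely gives a valid preimage and that the ordering of the pair, together with the rule fixing the arc marked~$1$, leaves no further discrete choice. Once this is established, the number of preimages of the fixed pair is exactly $C_{K-1}^{K_1-1}$.
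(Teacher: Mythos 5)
Your proposal is correct and follows essentially the same route as the paper: reconstruct the added edge, the cyclic orders, and the arc marked with~$1$ uniquely as in Lemma~\ref{l1_3}, and then observe that the only remaining freedom is the choice of which $K_1-1$ of the numbers $\{2,\ldots,K\}$ were carried by the faces of the first component, giving $C_{K-1}^{K_1-1}$ preimages. The paper's own proof is in fact terser than yours, leaving the reconstruction details and the final exactness check implicit, so your write-up adds detail rather than deviating in method.
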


\begin{proof} The proof is similar to the  proof of lemma~\ref{l1_3}, but  in this case the added edge connects the beginnings of arcs marked with~1 in the maps $(X_1^1,G_1^1)$ and $(X_1^2,G_1^2)$. The difference is that  the enumeration of faces can be reconstructed in different ways.  Let's remind, that  we enumerate the faces in each of the maps $(X_1^1,G_1^1)$ and $(X_1^2,G_1^2)$ in the increasing order of their numbers in the initial map.  Hence the enumeration of faces of the initial map is uniquely defined by the choice of the set of numbers that   faces of~$(X_1^1,G_1^1)$ with numbers more than~1 have in the initial map. This set can be chosen in exactly~$C_{K-1}^{K_1-1}$ ways.
\end{proof}

\subsection{Some  useful formulas}

In this section with the help of techniques of the book~\cite{GKP} we proof some equalities for binomial coefficients and similar formulas, that we need in what follows.

\begin{definition}
For integers $i$ and $k$, such that $i\geq 0$, $k>0$ define  $p_k(i)$ as follows:
\begin{equation}
p_k(i)=\prod_{t=0}^{k-1}{(i+1-t)}.
\label{eq:p_k_i}
\end{equation}
We set $p_0(i)=1$ for every $i\geq 0$.
\end{definition}

It if easy to verify   the following equations:
\begin{align}
\label{eq:pk_pk1}
& p_k(i)=(i+2-k)p_{k-1}(i),\quad i\geq 0, k>0;\\
\label{eq:pki_pk1i1}
& p_k(i)=(i+1)p_{k-1}(i-1),\quad i>0, k>0;
\end{align}

\begin{rem}
The numbers~$p_k(i)$ are often called ``descending degrees'' of~$i+1$ and denoted by $(i+1)^{\underline{k}}$.
\end{rem}

\begin{definition}
For integers $M$, $m$, $k$, such that $M\geq m\geq 0$, $k\leq m$ let 
\begin{equation}
A(M,m,k)=\sum_{j=m}^M{\frac{(2j)!}{4^j j!(j-k)!}}.
\label{eq:A_M_m_k_def}
\end{equation}
For $m>M\ge-1$ we set~$A(M,m,k)=0$.
\end{definition}

\begin{lemma}
Let $M$, $m$, $k$ be  integers, such that $M\geq m\geq 0$, $k\leq m$. Then
\begin{equation}
A(M,m,k)=\frac{2}{2k+1}\left(\frac{(M+1-k)(2M+2)!}{4^{M+1}(M+1)!(M+1-k)!}-\frac{(m-k)(2m)!}{4^m m!(m-k)!}\right).
\label{eq:A_M_m_k_form}
\end{equation}
\end{lemma}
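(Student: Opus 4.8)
The plan is to prove~\eqref{eq:A_M_m_k_form} by exhibiting a closed antidifference for the summand and telescoping, in the spirit of the indefinite-summation techniques of~\cite{GKP}. Write $T(j)=\dfrac{(2j)!}{4^{j}\,j!\,(j-k)!}$ for the general term of the sum~\eqref{eq:A_M_m_k_def}, and set
\[
g(j):=\frac{(j-k)\,(2j)!}{4^{j}\,j!\,(j-k)!},\qquad j\ge k,
\]
so that the right-hand side of~\eqref{eq:A_M_m_k_form} is exactly $\frac{2}{2k+1}\bigl(g(M+1)-g(m)\bigr)$. The whole statement then reduces to the single algebraic identity
\[
T(j)=\frac{2}{2k+1}\bigl(g(j+1)-g(j)\bigr),\qquad j\ge k,
\]
after which summing over $j=m,\dots,M$ collapses the telescope.

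The choice of $g$ is not pulled from nowhere: since $T$ is a hypergeometric term with $T(j+1)/T(j)=\frac{2j+1}{2(j+1-k)}$ a rational function of $j$, Gosper's algorithm predicts an antidifference of the form $R(j)\,T(j)$ with $R$ rational, and here $R(j)=\frac{2(j-k)}{2k+1}=\frac{2}{2k+1}\cdot\frac{g(j)}{T(j)}$. Thus the real content is the verification of the displayed identity, which I would carry out directly. Expanding $g(j+1)$ via $(2j+2)!=(2j+2)(2j+1)(2j)!$, $(j+1)!=(j+1)\,j!$, $(j+1-k)!=(j+1-k)(j-k)!$ and $4^{j+1}=4\cdot4^{j}$, the factors $(j+1-k)$ cancel and $\frac{2j+2}{j+1}=2$, so $g(j+1)=\frac{2j+1}{2}\,T(j)$; likewise $g(j)=(j-k)\,T(j)$ straight from the definition. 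Hence
\[
g(j+1)-g(j)=\Bigl(\frac{2j+1}{2}-(j-k)\Bigr)T(j)=\frac{2k+1}{2}\,T(j),
\]
and multiplying by $\frac{2}{2k+1}$ recovers $T(j)$, as required.

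With the identity in hand the sum telescopes:
\[
A(M,m,k)=\sum_{j=m}^{M}T(j)=\frac{2}{2k+1}\sum_{j=m}^{M}\bigl(g(j+1)-g(j)\bigr)=\frac{2}{2k+1}\bigl(g(M+1)-g(m)\bigr),
\]
which is~\eqref{eq:A_M_m_k_form}. I do not expect any genuine obstacle here; the only points needing a word of care are bookkeeping ones. First, the identity must be read as an identity of the corresponding rational expressions valid for every integer $j\ge k$, so that it is legitimate to apply it at each index of the range $m\le j\le M$ (this uses the hypothesis $m\ge k$). Second, the degenerate index $j=k$, which occurs when $m=k$, is harmless: the definition of $g$ keeps the factor $(j-k)$ attached, so $g(k)=0$ and no factorial of a negative integer is ever actually required. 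With these remarks the derivation is complete.
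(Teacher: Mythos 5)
Your proof is correct and is essentially the paper's own argument: the paper verifies the identity $a_j(k)=\frac{2}{2k+1}\bigl((j+1-k)a_{j+1}(k)-(j-k)a_j(k)\bigr)$ and telescopes over $j=m,\dots,M$, which is exactly your antidifference $g(j)=(j-k)T(j)$ written out term by term. Your explicit Gosper-style packaging and the remark about the degenerate index $j=k$ are fine additions, but the substance is identical.
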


\begin{proof}
Let $a_j(k)=\frac{(2j)!}{4^j j!(j-k)!}$, where $k\le j$. It is easy to see, that
   $$a_{j}(k)=\frac{2}{2k+1}\left((j+1-k)a_{j+1}(k)-(j-k)a_j(k)\right).$$
Let's sum these equalities for $j=m,\ldots,M$. The second term of each summand cancel on the first term of the next summand. Hence we obtain the equality we want to prove.
\end{proof}

\begin{corollary}
For all integers $M\ge0$ 
\begin{equation}
\label{eq:A_0_1}
A(M,0,-1)=\sum_{j=0}^M{\frac{(2j)!}{4^j j!(j+1)!}}=2-\frac{2(M+2)(2M+2)!}{4^{M+1}(M+1)!(M+2)!}.
\end{equation}
\end{corollary}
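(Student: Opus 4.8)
The plan is to obtain this identity as an immediate specialization of formula~\eqref{eq:A_M_m_k_form} from the preceding lemma, setting $m=0$ and $k=-1$. First I would check that this choice is admissible: the lemma requires $M\geq m\geq 0$ and $k\leq m$, and with $m=0$, $k=-1$ these conditions read $M\geq 0$ (the standing hypothesis of the corollary), $0\geq 0$, and $-1\leq 0$, all of which hold. The first (middle) equality in the statement is then nothing more than the definition~\eqref{eq:A_M_m_k_def} of $A(M,m,k)$ written out at $m=0$, $k=-1$, since $j-k=j+1$.

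It remains to evaluate the right-hand side of~\eqref{eq:A_M_m_k_form} at these values. The prefactor $\frac{2}{2k+1}$ becomes $\frac{2}{-1}=-2$; the quantity $M+1-k$ becomes $M+2$, so that $(M+1-k)!=(M+2)!$; and in the subtracted term we have $m-k=1$ together with $(2m)!=0!=1$, $4^m=1$, $m!=0!=1$, and $(m-k)!=1!=1$, so this term equals exactly $1$. Assembling the pieces gives
\begin{equation*}
A(M,0,-1)=-2\left(\frac{(M+2)(2M+2)!}{4^{M+1}(M+1)!(M+2)!}-1\right)=2-\frac{2(M+2)(2M+2)!}{4^{M+1}(M+1)!(M+2)!},
\end{equation*}
which is the desired expression.

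There is no real obstacle here; the only point deserving a moment's attention is that $2k+1=-1$ is negative at $k=-1$. This is harmless: the telescoping identity underlying the lemma divides by $2k+1$, and since $2k+1\neq 0$ the derivation and hence formula~\eqref{eq:A_M_m_k_form} remain valid for $k=-1$. Thus the substitution is fully legitimate and the corollary follows.
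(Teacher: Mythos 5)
Your proof is correct and takes exactly the route the paper intends: the corollary appears there without a separate proof precisely because it is the specialization $m=0$, $k=-1$ of formula~\eqref{eq:A_M_m_k_form}, whose hypotheses $M\ge m\ge 0$, $k\le m$ explicitly allow negative $k$. Your substitution, simplification, and the remark that $2k+1=-1\neq 0$ keeps the telescoping derivation valid all match the paper's implicit argument.
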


\begin{corollary}
For all integers $M\ge-1$, $k\ge0$ 
\begin{equation}
\label{eq:A_k_k}
A(M,k,k)=\sum_{j=k}^M{\frac{(2j)!}{4^j j!(j-k)!}}=\frac{2}{2k+1}\cdot\frac{p_k(M)(2M+2)!}{4^{M+1}(M+1)!(M+1)!}.
\end{equation}
\end{corollary}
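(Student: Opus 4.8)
The plan is to read off \eqref{eq:A_k_k} from the lemma just proved by specializing its general formula \eqref{eq:A_M_m_k_form} to the diagonal case $m=k$, rather than summing \eqref{eq:A_M_m_k_def} afresh. The advantage is immediate: in \eqref{eq:A_M_m_k_form} the subtracted boundary term carries the factor $(m-k)$, which becomes $0$ when $m=k$, so that whole term disappears. What survives is the single closed expression
\[
A(M,k,k)=\frac{2}{2k+1}\cdot\frac{(M+1-k)(2M+2)!}{4^{M+1}(M+1)!\,(M+1-k)!},
\]
which already displays the block $\frac{2}{2k+1}\cdot\frac{(2M+2)!}{4^{M+1}(M+1)!}$ common to the right-hand side of \eqref{eq:A_k_k}.

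Second, I would reduce the corollary to a purely factorial identity between the two remaining rational factors. On the lemma side the surviving factor is $\frac{M+1-k}{(M+1-k)!}$, which collapses at once by cancelling the leading factor of the factorial against the numerator. On the target side the residual factor is $\frac{p_k(M)}{(M+1)!}$; here I would invoke the definition \eqref{eq:p_k_i} of the descending degree together with the recursion \eqref{eq:pki_pk1i1}, which telescopes to the closed form $p_k(M)=\frac{(M+1)!}{(M+1-k)!}$. Substituting this expression converts $\frac{p_k(M)}{(M+1)!}$ into a single reciprocal factorial, after which the corollary would follow by matching it against the reduced lemma factor.

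I do not expect a deep obstacle: all the genuine content — the term-wise relation $a_j(k)=\frac{2}{2k+1}\bigl((j+1-k)a_{j+1}(k)-(j-k)a_j(k)\bigr)$ and its telescoping — is already contained in the lemma, so the corollary is a one-line specialization followed by bookkeeping of factorials. The single point that demands care is precisely this bookkeeping, because a shift between $p_k(M)$ and $p_{k+1}(M)$ amounts exactly to an extra factor $M+1-k$: the honest cancellation of the lemma factor gives $\frac{M+1-k}{(M+1-k)!}=\frac{1}{(M-k)!}$, and one must track which descending degree this reciprocal factorial corresponds to. I would therefore carry the residual factor $M+1-k$ explicitly through the cancellation and pin down the index by testing small instances, noting that $M=k$ is degenerate (there $p_k$ and $p_{k+1}$ coincide), so the decisive check is at $M=k+1$.
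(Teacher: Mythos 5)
Your route is identical to the paper's: its entire proof of \eqref{eq:A_k_k} is the specialization of \eqref{eq:A_M_m_k_form} at $m=k$ (where the subtracted term vanishes because of the factor $m-k$), plus the remark that for $-1\le M<k$ both sides are zero. However, the bookkeeping issue you flag at the end is not a formality to be settled later, and you should carry it to its conclusion: done honestly, it shows that \eqref{eq:A_k_k} as printed is misstated. The specialization gives
\begin{equation*}
A(M,k,k)=\frac{2}{2k+1}\cdot\frac{(M+1-k)(2M+2)!}{4^{M+1}(M+1)!(M+1-k)!}
=\frac{2}{2k+1}\cdot\frac{(2M+2)!}{4^{M+1}(M+1)!(M-k)!},
\end{equation*}
whereas, since $p_k(M)=(M+1)!/(M+1-k)!$ by \eqref{eq:p_k_i}, the printed right-hand side equals $\frac{2}{2k+1}\cdot\frac{(2M+2)!}{4^{M+1}(M+1)!(M+1-k)!}$. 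The two differ by exactly the factor $M+1-k$ you isolated, so the correct statement carries $p_{k+1}(M)=(M+1-k)\,p_k(M)$ in place of $p_k(M)$. Your proposed decisive test at $M=k+1$ settles it: for $k=0$, $M=1$ the sum is $1+\frac{1}{2}=\frac{3}{2}$, the printed right-hand side is $\frac{3}{4}$, and the $p_{k+1}$ version gives $\frac{3}{2}$.

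So the final matching step of your plan would indeed fail for the statement as printed --- but because of a typo in the paper, not a flaw in your approach. Every subsequent application of \eqref{eq:A_k_k} in the paper uses the $p_{k+1}$ form: for instance, in the proof of theorem~\ref{th:eps_0_2_form} (which invokes \eqref{eq:A_k_k} with $k=0$, $M=N-2$) the sum $\sum_{i=0}^{N-2}(2i)!/(4^i i!i!)$ is replaced by $\frac{2(N-1)(2N-2)!}{4^{N-1}(N-1)!(N-1)!}$, carrying the extra factor $N-1=M+1-k$; the same holds for the uses with $k=0,1,2,3$ in lemma~\ref{lem:sum_i_eps}, in the evaluation of $D_2$, $D_3$, $D_4$, and in the torus section. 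To finish your proof, state the identity with $p_{k+1}(M)$, observe that your specialization proves it for $M\ge k$, and add the remaining case $-1\le M<k$, which your proposal omits: there the sum is empty, and $p_{k+1}(M)=\prod_{t=0}^{k}(M+1-t)$ contains a zero factor, so both sides vanish. (Note that the printed $p_k(M)$ version fails in this range as well: at $M=k-1$ one has $p_k(k-1)=k!\ne0$ while the sum is empty.)
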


\begin{proof}
For $M\ge k$ this equality directly follows from the formula~\eqref{eq:A_M_m_k_form}. For $M<k$ all terms of the equality we prove are equal to zero.
\end{proof}

\begin{definition}
For integers $N$ and $k$, such that $N,k\geq 0$ let
\begin{equation}
\label{eq:D_k_N}
D_k(N)=\sum_{i=0}^{N}{p_k(i)\frac{C_{2i+1}^i\cdot C_{2N-2i+1}^{N-i}}{2i+1}}.
\end{equation}
\end{definition}

\begin{lemma}
For  every $N\geq 0$ the following statements hold:
\begin{align}
\label{eq:D_0}
& \textrm{\textit{a)} }\quad
D_0(N)=C_{2N+2}^{N}; \\
\label{eq:D_1}
& \textrm{\textit{b)} }\quad
D_1(N)=2\cdot 4^N-\frac{1}{2}C_{2N+2}^{N+1}; \\
\label{eq:D_2}
& \textrm{\textit{c)} }\quad
D_2(N)=(N+1)\left(4^N-\frac{1}{2}C_{2N+2}^{N+1}\right); \\
\label{eq:D_3}
& \textrm{\textit{d)} }\quad
D_3(N)=N(N+1)\left(3\cdot 4^{N-1}-\frac{1}{2}C_{2N+2}^{N+1}\right); \\
\label{eq:D_4}
& \textrm{\textit{e)} }\quad
D_4(N)=(N-1)N(N+1)\left(10\cdot 4^{N-2}-\frac{1}{2}C_{2N+2}^{N+1}\right).
\end{align}
\end{lemma}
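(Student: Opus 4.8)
The plan is to collapse the sum into a single coefficient of a product of two generating functions, in the spirit of the generatingfunctionological (``snake oil'') techniques of~\cite{GKP}. The first step is to simplify the summand. Since $\frac{C_{2i+1}^i}{2i+1}=\frac{(2i)!}{i!\,(i+1)!}$ (a Catalan number) and $p_k(i)=\frac{(i+1)!}{(i+1-k)!}$, the factorials telescope and
\[
p_k(i)\,\frac{C_{2i+1}^i}{2i+1}=\frac{(2i)!}{i!\,(i+1-k)!},
\]
so that $D_k(N)=\sum_{i=0}^N \frac{(2i)!}{i!\,(i+1-k)!}\,C_{2N-2i+1}^{N-i}$. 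This displays $D_k(N)$ as a convolution, hence $D_k(N)=[z^N]\,P_k(z)\,Q(z)$ with $P_k(z)=\sum_{i\ge0}\frac{(2i)!}{i!\,(i+1-k)!}z^i$ and $Q(z)=\sum_{j\ge0}C_{2j+1}^j z^j$.

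Next I would put $P_k$ and $Q$ into closed form. From $\sum_{j}C_{2j}^j z^j=(1-4z)^{-1/2}$ one gets $Q(z)=\frac{1}{2z}\bigl((1-4z)^{-1/2}-1\bigr)$. For $k\ge1$ the coefficient of $P_k$ equals $i^{\underline{k-1}}C_{2i}^i$, so $P_k(z)=z^{k-1}\frac{d^{k-1}}{dz^{k-1}}(1-4z)^{-1/2}$; using $\frac{d^{m}}{dz^m}(1-4z)^{-1/2}=\frac{(2m)!}{m!}(1-4z)^{-(2m+1)/2}$ (immediate by induction) this becomes $P_k(z)=\frac{(2k-2)!}{(k-1)!}z^{k-1}(1-4z)^{-(2k-1)/2}$, while $P_0(z)=\frac{1-\sqrt{1-4z}}{2z}$. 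Multiplying, for $k\ge1$ everything collapses to
\[
P_k(z)Q(z)=\frac{(2k-2)!}{2(k-1)!}\,z^{k-2}\bigl((1-4z)^{-k}-(1-4z)^{-(2k-1)/2}\bigr).
\]

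It then remains to read off $[z^N]$. We have $[z^n](1-4z)^{-k}=4^n C_{n+k-1}^{k-1}$, and the half-integer power is expanded by the binomial series. The key simplification is that the prefactor $z^{k-2}$ shifts both extractions to the coefficient of $z^{N-k+2}$; because $2k+2(N-k+2)-2=2N+2$, the factorials produced by $(1-4z)^{-(2k-1)/2}$ compress exactly into $\frac{(2N+2)!}{2(N+1)!\,(N-k+2)!}$. This yields the single uniform identity
\[
D_k(N)=p_{k-1}(N)\Bigl(\tfrac12 C_{2k-2}^{k-1}\,4^{N-k+2}-\tfrac12 C_{2N+2}^{N+1}\Bigr),\qquad k\ge1,
\]
while the analogous (slightly different) reduction of $P_0(z)Q(z)=\frac{(1-\sqrt{1-4z})\bigl((1-4z)^{-1/2}-1\bigr)}{4z^2}$ gives $D_0(N)=C_{2N+2}^N$, proving~\eqref{eq:D_0}.

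Finally I would specialize $k=1,2,3,4$. There $p_{k-1}(N)$ equals $1,\ N+1,\ (N+1)N,\ (N+1)N(N-1)$, and $\tfrac12 C_{2k-2}^{k-1}4^{N-k+2}$ equals $2\cdot4^N,\ 4^N,\ 3\cdot4^{N-1},\ 10\cdot4^{N-2}$; substituting these into the uniform identity reproduces~\eqref{eq:D_1}--\eqref{eq:D_4}. The only genuinely delicate point is the coefficient extraction from the half-integer power $(1-4z)^{-(2k-1)/2}$ and the verification that it cancels so cleanly against the integer power; once the exponent bookkeeping $2k+2(N-k+2)-2=2N+2$ is noticed this is a routine manipulation of factorials, so I expect no substantial obstacle beyond careful arithmetic. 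Establishing the single closed form for general $k$ and then specializing is, I think, cleaner than treating the five cases separately.
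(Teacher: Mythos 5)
Your proposal is correct---I checked the summand reduction to $\frac{(2i)!}{i!\,(i+1-k)!}$, the closed forms for $P_k$ and $Q$, the collapse of the product, and the coefficient extraction, and your uniform identity $D_k(N)=p_{k-1}(N)\bigl(\tfrac12 C_{2k-2}^{k-1}4^{N-k+2}-\tfrac12 C_{2N+2}^{N+1}\bigr)$ for $k\ge1$ does specialize exactly to parts b)--e), with a) following from the separate $P_0Q$ computation---but it is a genuinely different route from the paper's. The paper proves a) by citing the convolution identity (5.62) of \cite{GKP} with $r=s=1$, $t=2$, and then handles b)--e) recursively: reindexing the sum yields $D_k(N)=p_k(0)C_{2N+1}^N+4D_k(N-1)+2(2k-3)D_{k-1}(N-1)$, which is then solved by induction on $N$ one value of $k$ at a time, using the closed forms for the partial sums $A(M,m,k)=\sum_{j=m}^M\frac{(2j)!}{4^j j!(j-k)!}$ established earlier in the same section. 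Your approach buys a single closed formula valid for all $k\ge1$ simultaneously (a statement the paper never even formulates), eliminating the case-by-case induction; its cost is the machinery of half-integer binomial series and the formal factor $z^{k-2}$, which for $k=1$ is a Laurent factor $z^{-1}$ (harmless, since the bracket vanishes at $z=0$) and which for $N<k-2$ asks for a coefficient at a negative index (correctly returning $0$, matching $p_{k-1}(N)=0$ there). The paper's method is more pedestrian but stays entirely within the elementary telescoping-sum toolkit it sets up once and reuses in later sections; yours is shorter, more general, and would immediately produce $D_k(N)$ for arbitrary $k$ if higher values were ever needed.
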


\begin{proof}
\textit{a)} We get use of the following formula for convolution of a sum of binomial coefficients (see, for example, \cite[formula~(5.62)]{GKP}):
\begin{displaymath}
\sum_k{C_{tk+r}^{k}\cdot C_{t(N-k)+s}^{N-k}\cdot \frac{r}{tk+r}}=C_{tN+r+s}^{N}.
\end{displaymath}

For  $r=s=1$, $t=2$, we obtain
$$
D_0(N)=\sum_{k=0}^{N}{\frac{C_{2k+1}^{k}\cdot C_{2N-2k+1}^{N-k}}{2k+1}}=C_{2N+2}^{N}.
$$

Thus, the statement \textit{a)} is proved. Before proofs of the next items let's state some useful  formulas. Let
\begin{equation}
\label{eq:d_i_N}
d_i(N)=(i+1)\frac{C_{2i+1}^{i}\cdot C_{2N-2i+1}^{N-i}}{2i+1}.
\end{equation} 

Clearly,  for $N,i>0$ the following statement holds:
\begin{equation}
\label{eq:diN_di1N1}
d_i(N)=2\cdot\frac{2i-1}{i}\cdot d_{i-1}(N-1).
\end{equation}

It follows from the formula~\eqref{eq:pk_pk1}, that $$(2i+1)p_{k-1}(i)=2p_k(i)+(2k-3)p_{k-1}(i)$$ for $i\ge 0$, $k>0$. 
Then taking into account the formulas~ \eqref{eq:pki_pk1i1}, \eqref{eq:d_i_N} and \eqref{eq:diN_di1N1} we obtain the following chain of equalities:
\begin{align*}
&D_k(N)=\sum_{i=0}^{N}{p_k(i)\frac{C_{2i+1}^i\cdot C_{2N-2i+1}^{N-i}}{2i+1}}=
p_k(0)C_{2N+1}^N+\sum_{i=1}^{N}{\frac{p_k(i)}{i+1}d_i(N)}=\\
& =p_k(0)C_{2N+1}^N+\sum_{i=1}^{N}{p_{k-1}(i-1)\frac{2(2i-1)}{i}d_{i-1}(N-1)}=\\
& =p_k(0)C_{2N+1}^N+2\sum_{j=0}^{N-1}{p_{k-1}(j)\frac{2j+1}{j+1}d_{j}(N-1)}=\\
& =p_k(0)C_{2N+1}^N+4\sum_{j=0}^{N-1}{\frac{p_{k}(j)}{j+1}d_{j}(N-1)}+
2(2k-3)\sum_{j=0}^{N-1}{\frac{p_{k-1}(j)}{j+1}d_{j}(N-1)}.
\end{align*} 

Whence by formulas~\eqref{eq:D_k_N} and \eqref{eq:d_i_N} it follows, that for all $k>0$

\begin{equation}
\label{eq:D_k_N_rec}
D_k(N)=p_k(0)C_{2N+1}^N+4D_k(N-1)+2(2k-3)D_{k-1}(N-1).
\end{equation}

\textit{b)} By formulas~\eqref{eq:D_k_N_rec} and~\eqref{eq:D_0}
\begin{align*}
 D_1(N)&=p_1(0)C_{2N+1}^N+4D_1(N-1)-2D_{0}(N-1)=\\
& =4D_1(N-1)+C_{2N+1}^N-2C_{2N}^{N-1}=4D_1(N-1)+\frac{(2N)!}{N!(N+1)!},
\end{align*}
whence by induction we can exclude the following formula: 
\begin{displaymath}
D_1(N)=4^N D_1(0)+\sum_{i=1}^{N}{4^{N-i}\frac{(2i)!}{i!(i+1)!}}.
\end{displaymath}

Note, that $D_1(0)=1$. By formula~\eqref{eq:A_0_1} we have:
\begin{align*}
 D_1(N)&=4^N+4^N\sum_{i=1}^{N}{\frac{(2i)!}{4^i i!(i+1)!}}=4^N\sum_{i=0}^{N}{\frac{(2i)!}{4^i i!(i+1)!}}=\\
& =4^N\left(2-\frac{2(N+2)(2N+2)!}{4^{N+1}(N+1)!(N+2)!}\right)=
2\cdot 4^N-\frac{1}{2}C_{2N+2}^{N+1}.
\end{align*}
 
Thus the statement \textit{b)} is proved.

Note, that $p_k(0)=0$ and $D_k(0)=0$ for $k>1$ (see formulas~\eqref{eq:p_k_i}, \eqref{eq:D_k_N}). Thus we can simplify the formula~\eqref{eq:D_k_N_rec} and find the following recursion for the numbers~$D_k(N)$ for $k>1$:

\begin{align}
 D_k(N)&=4D_k(N-1)+2(2k-3)D_{k-1}(N-1)=\nonumber \\
& =4^N D_k(0)+2(2k-3)\sum_{i=1}^{N}{4^{N-i}D_{k-1}(i-1)}=\nonumber \\
\label{eq:D_k_N_rec_2}
& =2(2k-3)4^{N}\sum_{i=1}^{N}{\frac{D_{k-1}(i-1)}{4^i}}.
\end{align}

\textit{c)} By formula \eqref{eq:D_k_N_rec_2}, \eqref{eq:D_1} and formula \eqref{eq:A_k_k} for $k=0$, we have: 
\begin{align*}
 D_2(N)&=2\cdot 4^{N}\sum_{i=1}^{N}{\frac{D_{1}(i-1)}{4^i}}=
2\cdot 4^{N}\sum_{i=1}^{N}{\frac{2\cdot 4^{i-1}-\frac{1}{2}C_{2i}^i}{4^i}}=\\
& =4^N\cdot N-4^N\sum_{i=1}^{N}{\frac{C_{2i}^i}{4^i}}=(N+1)4^N-4^N\sum_{i=0}^{N}{\frac{(2i)!}{4^i i!i!}}=\\
& =(N+1)4^N-4^N \frac{2(N+1)(2N+2)!}{4^{N+1}(N+1)!(N+1)!}=(N+1)\left(4^N-\frac{1}{2}C_{2N+2}^{N+1}\right).
\end{align*}
 
\textit{d)} By formulas \eqref{eq:D_k_N_rec_2}, \eqref{eq:D_2} and formula \eqref{eq:A_k_k} for $k=1$ we have: 
\begin{align*}
 D_3(N)&=2\cdot 3\cdot 4^{N}\sum_{i=1}^{N}{\frac{D_{2}(i-1)}{4^i}}=
6\cdot 4^{N}\sum_{i=1}^{N}{\frac{i\cdot \left(4^{i-1}-\frac{1}{2}C_{2i}^i\right)}{4^i}}=\\
& =6\cdot 4^{N-1}\sum_{i=1}^{N}{i}-3\cdot 4^N\sum_{i=1}^{N}{\frac{(2i)!}{4^i i!(i-1)!}}=\\
& =N(N+1)\cdot 3\cdot 4^{N-1}-3\cdot 4^N \frac{2N(N+1)(2N+2)!}{3\cdot 4^{N+1}(N+1)!(N+1)!}=\\
& =N(N+1)\left(3\cdot 4^{N-1}-\frac{1}{2}C_{2N+2}^{N+1}\right).
\end{align*}
 
\textit{e)} By formulas \eqref{eq:D_k_N_rec_2}, \eqref{eq:D_3} and formula \eqref{eq:A_k_k} for $k=2$ we have: 
\begin{align*}
 D_4(N)&=2\cdot 5\cdot 4^{N}\sum_{i=1}^{N}{\frac{D_{3}(i-1)}{4^i}}=
10\cdot 4^{N}\sum_{i=1}^{N}{\frac{(i-1)i\cdot \left(3\cdot 4^{i-2}-\frac{1}{2}C_{2i}^i\right)}{4^i}}=\\
& =30\cdot 4^{N-2}\left(\sum_{i=1}^{N}{i^2}-\sum_{i=1}^{N}{i}\right)-
5\cdot 4^N\sum_{i=2}^{N}{\frac{(2i)!}{4^i i!(i-2)!}}=\\
& =10\cdot 4^{N-2}(N-1)N(N+1)-\frac{5\cdot 4^N \cdot 2(N-1)N(N+1)(2N+2)!}{5\cdot 4^{N+1}(N+1)!(N+1)!}=\\
& =(N-1)N(N+1)\left(10\cdot 4^{N-2}-\frac{1}{2}C_{2N+2}^{N+1}\right).
\end{align*} 
\end{proof}

\section{Gluing together  two polygons into a sphere}

In this   section  we prove recurrence and explicit formulas for the numbers $\varepsilon _0(N,2)$. 
The number $\varepsilon _0(N,2)$ can be interpreted as the number of marked maps on a sphere, that contain $N$ edges and 2 faces.	

\begin{theorem}
\label{th:eps_0_2_rec}
The numbers $\varepsilon_0(N,2)$ for $N\geq 2$ satisfy the following recursion:
\begin{equation}
\varepsilon _0(N,2)=2\sum_{i=0}^{N-2}{\varepsilon_0(i)\varepsilon_0(N-i-1,2)}+N(2N-1)\varepsilon_0(N-1).
\label{eq:eps_0_2_rec}
\end{equation} 
\end{theorem}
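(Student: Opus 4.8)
The plan is to apply the operation of deleting the edge marked with~$1$ to every marked map of genus~$0$ with $N$ edges and $K=2$ faces, and to classify these maps according to the four cases of the operation. Since a source map here has genus~$g=0$, case~\textbf{3} cannot occur: it would produce a map of genus $g-1=-1$. Hence each such map falls into exactly one of cases~\textbf{1}, \textbf{2}, \textbf{4}, and therefore $\varepsilon_0(N,2)$ equals the sum of the numbers of source maps belonging to these three cases. I would compute each of these numbers by inverting the corresponding lemma, which records how many source maps produce a given target map (or target pair).

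I would first handle case~\textbf{1}. The target here is a connected genus-$0$ map with $N-1$ edges and $K-1=1$ face; its unique face is bounded by all $2(N-1)$ arcs, so $M=2N-2$. By Lemma~\ref{l1_1} every such target is produced $\frac{(M+1)(M+2)(K-1)}{2}=\frac{(2N-1)(2N)}{2}=N(2N-1)$ times, and the number of such targets is $\varepsilon_0(N-1,1)=\varepsilon_0(N-1)$. Thus case~\textbf{1} contributes $N(2N-1)\varepsilon_0(N-1)$, exactly the second summand of~\eqref{eq:eps_0_2_rec}. Next, in case~\textbf{2} the target is a genus-$0$ map with $N-1$ edges and $K=2$ faces, obtained exactly twice by Lemma~\ref{l1_2}, so this case contributes $2\varepsilon_0(N-1,2)$.

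Finally, case~\textbf{4} produces ordered pairs of genus-$0$ maps (both of genus~$0$, since the genera are nonnegative and sum to~$0$), each with at least one edge, having together $N-1$ edges and $K+1=3$ faces. The only face-splittings are $(K_1,K_2)=(1,2)$ and $(2,1)$, and in both the multiplicity $C_{K-1}^{K_1-1}=C_1^{K_1-1}$ of Lemma~\ref{l1_4} equals~$1$. Writing $i$ for the number of edges of the first map, the pairs of type $(1,2)$ contribute $\sum_{i=1}^{N-2}\varepsilon_0(i)\varepsilon_0(N-1-i,2)$, and those of type $(2,1)$ give the same sum after the substitution $i\mapsto N-1-i$; hence case~\textbf{4} contributes $2\sum_{i=1}^{N-2}\varepsilon_0(i)\varepsilon_0(N-1-i,2)$. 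Adding the case~\textbf{2} term and using $\varepsilon_0(0)=1$ to absorb it as the $i=0$ summand, cases~\textbf{2} and~\textbf{4} together yield $2\sum_{i=0}^{N-2}\varepsilon_0(i)\varepsilon_0(N-1-i,2)$, the first summand of~\eqref{eq:eps_0_2_rec}. Summing the three contributions gives the claimed recursion. The only delicate points are verifying that $M=2N-2$ for the one-faced target, that the binomial factor in case~\textbf{4} is identically~$1$, and that the two orderings of case~\textbf{4} together with the case~\textbf{2} term recombine precisely into the convolution $\sum_{i=0}^{N-2}$; this bookkeeping, rather than any deep idea, is where the care is needed.
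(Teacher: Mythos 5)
Your proposal is correct and follows essentially the same route as the paper's own proof: apply the edge-deletion operation to all genus-$0$ maps with $N$ edges and $2$ faces, rule out case \textbf{3} since it would drop the genus below zero, and invert Lemmas~\ref{l1_1}, \ref{l1_2} and~\ref{l1_4} (with $M=2N-2$ in case~\textbf{1} and trivial binomial factor in case~\textbf{4}) to get the three contributions, absorbing the case~\textbf{2} term as the $i=0$ summand via $\varepsilon_0(0)=1$. The bookkeeping you flag (the value of $M$, the two orderings in case~\textbf{4}, the recombination into the convolution) matches the paper exactly.
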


\begin{proof}
Let's consider all marked maps on a sphere that contain $N>1$ edges and 2 faces and apply to each of these maps the  operation of deleting an edge. Since  $g=0$,  case \textbf{3}  from the description of this operation is impossible.
Hence we have as a result of operation one of these cases:
\begin{itemize}
 \item a marked map on a sphere with $N-1$ edges and one face;
 \item a marked map  on a sphere with $N-1$ edges and two faces;
 \item an ordered pair of two marked maps on  spheres, that contain together  $N-1$ edges and three faces, where each of these maps contains at least one edge.
\end{itemize}

By lemmas~\ref{l1_1} and~\ref{l1_2} each of~$\varepsilon_0(N-1)$ marked maps on a sphere with $N-1$ edges and one face occurs exactly $N(2N-1)$ times, and each of ${\varepsilon_0(N-1,2)}$ marked maps on a sphere with $N-1$ edges and two faces occurs exactly twice.

Consider  ordered pairs of two marked maps on  spheres, that contain together  $N-1$ edges and three faces, such that in each of them the map with one face contains $i$ edges. Clearly,  the number of such pairs is equal to $2\varepsilon_0(i)\varepsilon_0(N-i-1,2)$, and by lemma~\ref{l1_4} each such pair occurs exactly once.

Hence we have the following formula:
\begin{displaymath}
\varepsilon _0(N,2)=N(2N-1)\varepsilon _0(N-1)+2\varepsilon _0(N-1,2)+2\sum_{i=1}^{N-2}{\varepsilon _0(i)\varepsilon _0(N-i-1,2)}.
\end{displaymath}

To complete the proof of lemma it remains to note that $\varepsilon _0(0)=1$.
\end{proof}

\begin{theorem}
\label{th:eps_0_2_form}
For $N\geq 1$ 
\begin{equation}
\varepsilon _0(N,2)=N\cdot 4^{N-1}.
\label{eq:eps_0_2}
\end{equation}
\end{theorem}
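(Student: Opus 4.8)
The plan is to argue by induction on $N$, playing the recursion of Theorem~\ref{th:eps_0_2_rec} off against the explicit Catalan value $\varepsilon_0(i)=\frac{(2i)!}{i!(i+1)!}$ from~\eqref{eq:eps_B_0}. For the base case $N=1$ one checks directly that a sphere carries a single marked map with one edge and two faces, so $\varepsilon_0(1,2)=1=1\cdot 4^{0}$; equivalently, the right-hand side of~\eqref{eq:eps_0_2_rec} already evaluates to $1$ at $N=1$, since the convolution is empty and $\varepsilon_0(0)=1$.

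For the inductive step I would fix $N\ge 2$, assume $\varepsilon_0(m,2)=m\,4^{m-1}$ for all $1\le m<N$, and substitute this together with the Catalan formula into~\eqref{eq:eps_0_2_rec}. Writing $N-1-i$ for the edge-count of the second factor, the convolution term becomes
$$2\sum_{i=0}^{N-2}\varepsilon_0(i)\,(N-1-i)\,4^{\,N-2-i}=2\cdot 4^{\,N-2}\sum_{i=0}^{N-2}\frac{\varepsilon_0(i)}{4^{i}}\bigl((N-1)-i\bigr),$$
so everything reduces to the two auxiliary sums $\sum_{i=0}^{N-2}\frac{\varepsilon_0(i)}{4^{i}}$ and $\sum_{i=0}^{N-2}\frac{i\,\varepsilon_0(i)}{4^{i}}$.

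These are exactly the sums the formulas of the previous section are built to handle. Since $\frac{\varepsilon_0(i)}{4^{i}}=\frac{(2i)!}{4^{i}i!(i+1)!}$, the first sum is $A(N-2,0,-1)$, evaluated in closed form by~\eqref{eq:A_0_1}. For the second I would use $(i+1)\varepsilon_0(i)=C_{2i}^{i}$, hence $i\,\varepsilon_0(i)=C_{2i}^{i}-\varepsilon_0(i)$, which turns $\sum_{i}\frac{i\,\varepsilon_0(i)}{4^{i}}$ into $A(N-2,0,0)-A(N-2,0,-1)$, both given in closed form by the lemma on $A(M,m,k)$ (formula~\eqref{eq:A_M_m_k_form}). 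After simplification the doubled convolution collapses to $N\,4^{\,N-1}-(2N-1)\,C_{2N-2}^{N-1}$: a clean power-of-four piece plus a single central-binomial remainder.

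The last step is to add the term $N(2N-1)\varepsilon_0(N-1)$ from~\eqref{eq:eps_0_2_rec}. Using $N/N!=1/(N-1)!$ one finds $N(2N-1)\varepsilon_0(N-1)=(2N-1)\,C_{2N-2}^{N-1}$, so this term cancels the central-binomial remainder exactly, leaving $\varepsilon_0(N,2)=N\,4^{\,N-1}$ and closing the induction. The only real difficulty is this final bookkeeping: one must track the powers of $4$ and the shift between the summation limit $N-2$ and the indices $N-1,N$ in the closed forms precisely enough that the two binomial terms are literally equal, so they cancel rather than merely nearly cancel. As a consistency check, the whole computation packages into generating functions: with $C(z)=\frac{1-\sqrt{1-4z}}{2z}$ the recursion reads $E(z)=2zC(z)E(z)+z(1-4z)^{-3/2}$ for $E(z)=\sum_{N\ge1}\varepsilon_0(N,2)z^{N}$, and since $1-2zC(z)=\sqrt{1-4z}$ this gives $E(z)=z(1-4z)^{-2}$, whose coefficients are exactly $N\,4^{\,N-1}$.
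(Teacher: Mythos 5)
Your proof is correct and follows essentially the same route as the paper: induction on $N$ via the recursion of Theorem~\ref{th:eps_0_2_rec}, reduction of the convolution to the two sums $\sum_{i=0}^{N-2}\frac{(2i)!}{4^i i!(i+1)!}$ and $\sum_{i=0}^{N-2}\frac{(2i)!}{4^i i!i!}$ evaluated by the closed forms~\eqref{eq:A_0_1} and~\eqref{eq:A_M_m_k_form}, after which the binomial remainder cancels against $N(2N-1)\varepsilon_0(N-1)$ exactly as you describe. Your closing generating-function identity $E(z)=z(1-4z)^{-2}$ is a nice independent confirmation (indeed a self-contained alternative proof), but the inductive argument itself matches the paper's.
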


\begin{proof} We proof this statement by induction. The base  for  $N=1$ is obvious. Let's prove the induction step. Let $\varepsilon _0(k,2)=k\cdot 4^{k-1}$ for all $k<N$. We will show that  $\varepsilon _0(N,2)=N\cdot 4^{N-1}$.

By formulas \eqref{eq:eps_0_2_rec},   \eqref{eq:eps_B_0}, \eqref{eq:A_0_1} and by  formula \eqref{eq:A_k_k} for $k=0$ we have the following:
\begin{align*}
& \varepsilon _0(N,2)=2\sum_{i=0}^{N-2}{\varepsilon _0(i)\varepsilon _0(N-i-1,2)}+N(2N-1)\varepsilon _0(N-1)= \\
& = 2\sum_{i=0}^{N-2}{\frac{(2i)!}{i!(i+1)!}(N-i-1)4^{N-i-2}}+N(2N-1)\varepsilon _0(N-1)= \\
& = 2\cdot 4^{N-2}\left(N\sum_{i=0}^{N-2}{\frac{(2i)!}{4^i i!(i+1)!}}-\sum_{i=0}^{N-2}{\frac{(2i)!}{4^i i!i!}}\right)
+N(2N-1)\varepsilon _0(N-1)=\\
& =2^{2N-3}\left(2N-\frac{2N^2(2N-2)!}{4^{N-1}(N-1)!N!}-\frac{2(N-1)(2N-2)!}{4^{N-1}(N-1)!(N-1)!}\right)+ \\
&\phantom{=\ } + N(2N-1)\frac{(2N-2)!}{(N-1)!N!}=N\cdot 4^{N-1}.
\end{align*}
\end{proof}

\section{Gluing together  two bicolored polygons into a sphere}

In this section  we prove recurrence and explicit formulas for the numbers  $B_0(N,2)$.
The number  $B_0(N,2)$  can be interpreted as the number of marked bicolored maps on a sphere, that contain $N$ edges and 2 faces. 

\begin{theorem}
\label{th:B_0_2_rec}
The numbers $B_0(N,2)$ for $N\geq 2$ satisfy the following recursion:
\begin{equation}
B_0(N,2)=2\sum_{i=0}^{N-3}{B_0(i)B_0(N-i-1,2)}+\frac{N(N-1)}{2}B_0(N-1).
\label{eq:B_0_2_rec}
\end{equation} 
\end{theorem}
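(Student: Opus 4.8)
The plan is to repeat the strategy of Theorem~\ref{th:eps_0_2_rec}: apply the operation of deleting an edge to every marked bicolored map on a sphere with $N>1$ edges and two faces and collect the outcomes. Since $g=0$, case~\textbf{3} is impossible, so each map falls into case~\textbf{1} (a one-face map with $N-1$ edges), case~\textbf{2} (a two-face map with $N-1$ edges) or case~\textbf{4} (an ordered pair of spherical maps having together $N-1$ edges and three faces). The one new feature is that the marked edge $\tilde e_1$ now joins a white vertex to a black one, so its marked arc $e_1$ is white while the opposite arc $e_1'$ is black. Hence the marking rule of the operation, which produces the new arc marked~$1$ as a value of $\tau$, may output a \emph{black} arc; whenever it does I will re-mark via the bijection of Remark~\ref{r2}, using that the number of quasimarked maps of given parameters whose distinguished arc is black equals $B_0$ of those parameters. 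This color bookkeeping is exactly what converts the coefficients of Theorem~\ref{th:eps_0_2_rec} into those of~\eqref{eq:B_0_2_rec}.

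For case~\textbf{1} I would establish the bicolored analog of Lemma~\ref{l1_1}. Reconstructing the original map amounts to drawing a chord $\tilde e_1$ inside the $M$-gon $F_1$ with $M=2N-2$, but now the chord must join one of the $M/2$ white vertices to one of the $M/2$ black ones, so loops are excluded. Moreover the arc $e_2$ marked in the reconstructed face~$2$ must be white, whence $e_2\ne e_1'$ always and the sub-case ``$e_j=e_1'$'' of Lemma~\ref{l1_1} cannot occur; instead $e_2=f_1$ is forced, which requires $f_1$ to lie on the $e_1'$-side of the chord. Counting the chords for which $f_1$ lies on the $e_1'$-side gives $\tfrac12\cdot\tfrac M2\bigl(\tfrac M2+1\bigr)=\tfrac{N(N-1)}2$ reconstructions, so every one-face map is produced $\tfrac{N(N-1)}2$ times, yielding the term $\tfrac{N(N-1)}2B_0(N-1)$.

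In case~\textbf{2} the pendant marked edge has a leaf $u$ of unconstrained color: if $u$ is black the reduced map is white-marked and realizes each two-face map once, whereas if $u$ is white the reduced map is black-marked and, through Remark~\ref{r2}, again realizes each white-marked two-face map once, so the multiplicity is $2$ and the contribution is $2B_0(N-1,2)=2B_0(0)B_0(N-1,2)$, the $i=0$ summand. For case~\textbf{4} the operation returns an ordered pair whose first ($e_1$-side) component carries the black arc $\tau(e_1)$ and whose second ($e_1'$-side) component carries the white arc $\tau(e_1')$; applying Remark~\ref{r2} to the black component and using $C_{1}^{K_1-1}=1$ (Lemma~\ref{l1_4} with $K=2$) shows each admissible pair arises once. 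Summing over the two face-splits $(K_1,K_2)=(1,2)$ and $(2,1)$, with the one-face side carrying $i$ edges for $1\le i\le N-3$ (a spherical bicolored map with two faces needs at least two edges, one with a single face at least one), gives $2\sum_{i=1}^{N-3}B_0(i)B_0(N-i-1,2)$. Adding the three contributions and using $B_0(0)=1$ produces~\eqref{eq:B_0_2_rec}. I expect the recount in case~\textbf{1} to be the main obstacle, as it is there that the bipartite restriction and the white-marked-arc requirement jointly alter the count in a way that has no counterpart in the proof of Theorem~\ref{th:eps_0_2_rec}.
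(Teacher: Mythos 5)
Your proposal is correct and follows essentially the same route as the paper's proof: the same three-case analysis of the edge-deletion operation, the same use of Remark~\ref{r2} to handle outputs whose new marked arc is black, and the same key recount in case~\textbf{1}, where the white/black constraint forces $e_2=f_1$ (ruling out the $e_j=e_1'$ subcase of Lemma~\ref{l1_1}) and the chords with $f_1$ on the $e_1'$-side number exactly $\frac{N(N-1)}{2}$. The paper verifies that count by numbering the vertices and reducing it to counting pairs $(a,b)$ with $a$ odd, $b$ even, $b>a$, which agrees with your value.
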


\begin{proof}
The result of applying an operation of deleting an edge to a bicolored  marked map is either a bicolored  quasimarked map or a ordered pair of bicolored  quasimarked maps. Let as apply this operation  to all marked bicolored maps on a sphere with $N>1$ edges and 2 faces. For each map we have exactly one of the following three cases.

\q1. \textit{Two arcs of the deleted edge~$\tilde e_1$ belong to different faces.} 

\noindent Let's remind that all marked arcs of a marked bicolored map are white. In particular, the arc~$e_1$ is white and the opposite arc~$e_1'$ is black. Hence~$e_1'$ couldn't be marked with the number~2. Consequently, the arc marked with~1 in the obtained map was marked with~2 in the initial map, and this arc is white. Thus in this case we obtain a marked bicolored map on a sphere with~$N-1$ edges and one face. Let's count how many times  such a map occurs.

Let  $(X,G_1)$ be a bicolored marked map on a sphere with $N-1$ edges and one face; $F_1$ be its only face; $D_1$ be the ${(2N-2)}$-gon, 
correspondent to the face~$F_1$ and $f_1$ be the arc marked with~1. This map can be obtained by deleting from the initial map  an edge $\tilde e_1$ drawn inside the face~$F_1$. This edge is defined uniquely up to isomorphism by its ends --- a pair of vertices of different colors of the polygon  $D_1$. The marks are also reconstructed uniquely: the arc $f_1$ was marked with the number~2, and the white arc correspondent to the edge $\tilde e_1$ was marked with~1. 

It remains to count the number of ways to draw the edge~$\tilde e_1$, such that its white arc~$e_1$ and the arc~$f_1$ lie in  different parts of the face $F_1$.  Let's enumerate the vertices of~$D_1$ with integers  from~1  to $2N-2$ clockwise, starting with the beginning of the marked edge of the polygon~$D_1$. Then  white vertices have odd numbers and black vertices have even numbers. 
Let~$a$  and~$b$ be  the beginning and the end of the arc~$e_1$. Then $a$ is odd and~$b$ is even, $a,b\in\{1,\ldots,2N-2\}$. The arcs $e_1$ and $f_1$ lie in different parts of the face $F_1$ if and only if the arcs $e_1'$ and $f_1$ lie in the same part of $F_1$. The latter means that~ $b>a$ (see figure~\ref{ris4}). Now it is easy to see that  the number of such ordered pairs $a,b$ is $\frac{N(N-1)}{2}$. 

Hence each of $B_0(N-1)$ marked bicolored maps on a sphere with~$N-1$ edges and one face occurs exactly~$\frac{N(N-1)}{2}$ times, i.e.\ the case~\textbf{1} occurs for $\frac{N(N-1)}{2}B_0(N-1)$ initial maps.

\noindent
\begin{figure}[!hb]
\centerline{\includegraphics{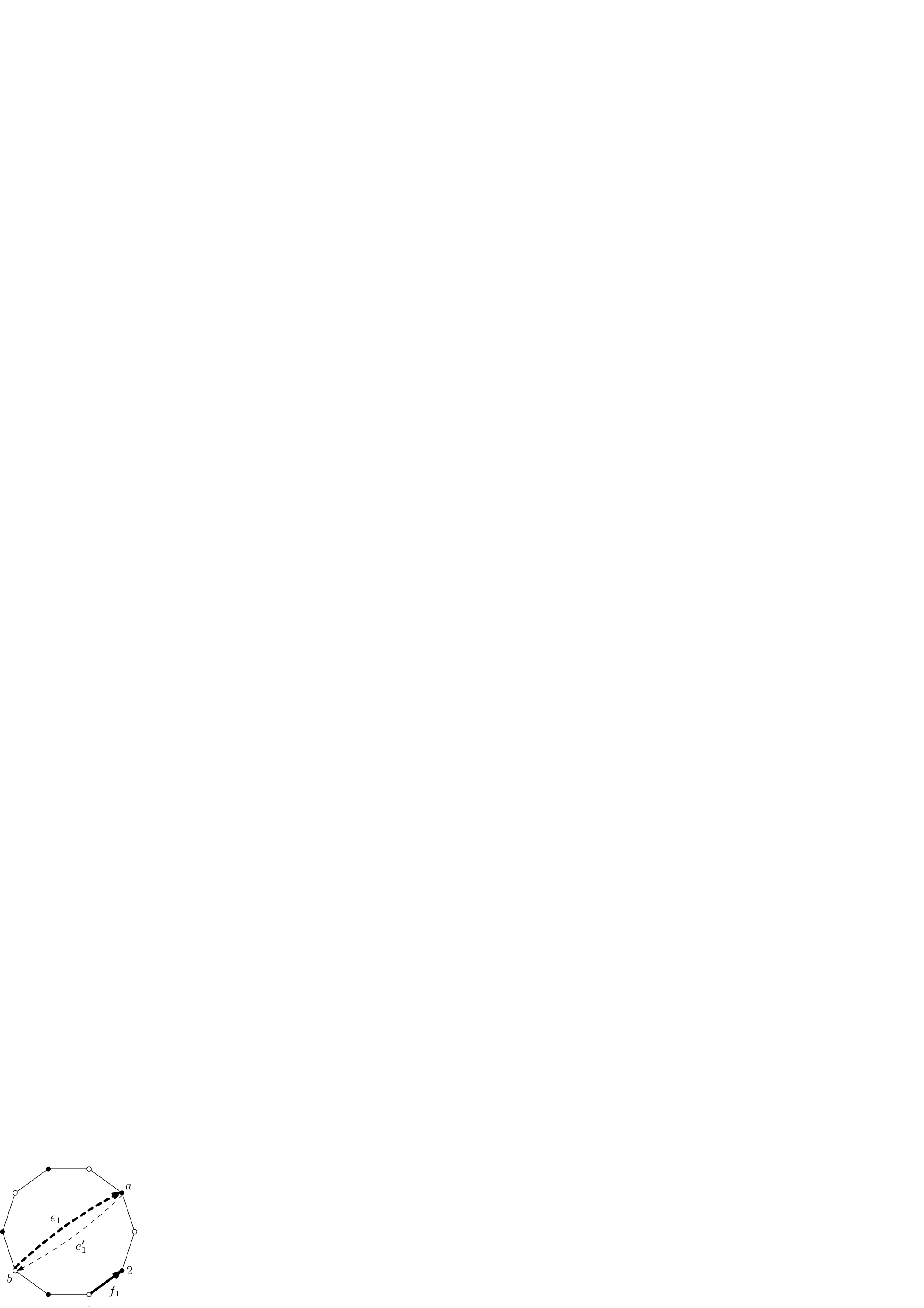}}
\caption{the operation of deleting an edge for a bicolored map, case~\textbf{1}.}
\label{ris4}
\end{figure}

\q2. \textit{Two arcs of the deleted edge are successive arcs of the face number~$1$.} 

\noindent
In this case we obtain a quasimarked bicolored map with~$N-1$ edges and two faces. The arc marked with~2  of this map must be white, the arc marked with~1  can be white or black. As it was shown in remark~\ref{r2} the numbers of such maps with black and white arcs marked with 1 respectively are equal. Hence, the number of maps we consider is~$2B_0(N-1,2)$.

Note, that  every of $2B_0(N-1,2)$  quasimarked maps, satisfying the conditions of case~\textbf{2}, occurs exactly once. Indeed, 
by the same arguments as in the proof of lemma~\ref{l1_2} we have that the deleted edge $uv$ can be reconstructed uniquely up to isomorphism.
Exactly one  of two arcs $uv$ and $vu$ is white, this arc is to be marked with~1. Thus, case \textbf{2} occurs for $\frac{N(N-1)}{2}B_0(N-1)$ initial maps.

\q3. \textit{Two arcs of the deleted edge are unsuccessive arcs of the face number~$1$.}

\noindent Since $g=0$, the graph becomes disconnected, i.e.\ the case we consider corresponds to the case \textbf{4} of the description of the operation of deleting an edge. In this case we obtain an ordered pair of two quasimarked bicolored maps on a sphere,  that contain together~$N-1$ edges and three faces. Each of these faces contains  at least one edge. In these two maps we mark with~1 the arcs, next to  arcs of the deleted edge: in the first map it is the  arc next to  $e_1$  (which is black) and in the second map it is the arc next to~$e_1'$ (which is white). By remark~\ref{r2} we have that the number of quasimarked maps we consider is equal to the number of ordered pairs of marked bicolored maps on a sphere, that contain together $N-1$ edges  and three faces.

By the same reasonings as in the proof of lemma~\ref{l1_4} we have that each ordered pair of marked bicolored maps on the sphere satisfying the conditions of case \textbf{3}   occurs exactly once. Hence,   case~\textbf{3} occurs for the number of initial maps equal to the number of ordered pairs of marked bicolored maps on a sphere, that contain together $N-1$ edges  and three faces.  It is easy to see that the number of such pairs in which the map with one face contains  $i$ edges is equal to $2B_0(i)B_0(N-i-1,2)$.
Since $B_0(1,2)=0$ case \textbf{3} occurs for $2\sum_{i=1}^{N-3}{B_0(i)B_0(N-1-i,2)}$ initial maps.

Since  each of $B_0(N,2)$ initial maps satisfies the condition of one of the cases \textbf{1}, \textbf{2} and \textbf{3}, we obtain the following formula:
\begin{displaymath}
B_0(N,2)=\frac{N(N-1)}{2}B_0(N-1)+2B_0(N-1,2)+2\sum_{i=1}^{N-3}{B_0(i)B_0(N-i-1,2)}.
\end{displaymath} 

Since  $B_0(0)=1$ we have
\begin{displaymath}
B_0(N,2)=2\sum_{i=0}^{N-3}{B_0(i)B_0(N-i-1,2)} + \frac{N(N-1)}{2}B_0(N-1). 
\end{displaymath} 
\end{proof}

\begin{theorem}
\label{th:B_0_2_form}
The numbers $B_0(N,2)$ for $N\geq 1$ satisfy the following  formula:
\begin{equation}
B_0(N,2)=\varepsilon_0(N-1,2)=(N-1)\cdot 4^{N-2}.
\label{eq:B_0_2}
\end{equation}
\end{theorem}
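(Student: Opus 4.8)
The plan is to prove the explicit formula by induction on $N$, feeding the recursion \eqref{eq:B_0_2_rec} from Theorem~\ref{th:B_0_2_rec}; the computation runs almost verbatim parallel to the proof of Theorem~\ref{th:eps_0_2_form}. First observe that the second equality in the statement is automatic: once $B_0(N,2)=(N-1)\cdot 4^{N-2}$ is known, Theorem~\ref{th:eps_0_2_form} gives $\varepsilon_0(N-1,2)=(N-1)\cdot 4^{(N-1)-1}=(N-1)\cdot 4^{N-2}$, so $B_0(N,2)=\varepsilon_0(N-1,2)$. Hence the real content is the first equality. For the base I would check $N=1$, where $B_0(1,2)=0$ (two one-edge bicolored polygons cannot be glued, matching $(1-1)\cdot 4^{-1}=0$), and $N=2$, where \eqref{eq:B_0_2_rec} has empty sum and gives $B_0(2,2)=\tfrac{2\cdot 1}{2}B_0(1)=B_0(1)=1=(2-1)\cdot 4^{0}$, using $B_0(1)=\tfrac{2!}{1!\,2!}=1$ from \eqref{eq:eps_B_0}.

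For the induction step, fix $N\ge 3$ and assume the formula for all smaller arguments. In the convolution $\sum_{i=0}^{N-3}B_0(i)B_0(N-i-1,2)$ of \eqref{eq:B_0_2_rec} the second arguments $N-i-1$ range over $2,\dots,N-1$, so the induction hypothesis $B_0(N-i-1,2)=(N-i-2)\cdot 4^{N-i-3}$ applies to every term; I also substitute $B_0(i)=\frac{(2i)!}{i!(i+1)!}$ from \eqref{eq:eps_B_0}. Pulling out $4^{N-3}$ turns the sum into
\[
2\cdot 4^{N-3}\sum_{i=0}^{N-3}\frac{(2i)!}{4^{i}\,i!\,(i+1)!}\,(N-i-2),
\]
and writing $N-i-2=(N-1)-(i+1)$ splits it into $\sum\frac{(2i)!}{4^i i!(i+1)!}=A(N-3,0,-1)$ and $\sum\frac{(2i)!}{4^i i!\,i!}=A(N-3,0,0)$, which are precisely the quantities evaluated in \eqref{eq:A_0_1} and in \eqref{eq:A_k_k} (with $k=0$).

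After inserting those closed forms, the convolution term reduces to $(N-1)\cdot 4^{N-2}$ minus a single binomial correction of shape $\frac{(2N-3)(2N-4)!}{((N-2)!)^{2}}$. The remaining summand $\frac{N(N-1)}{2}B_0(N-1)$ simplifies, using $B_0(N-1)=\frac{(2N-2)!}{(N-1)!\,N!}$ from \eqref{eq:eps_B_0} together with $2N-2=2(N-1)$, to exactly $\frac{(2N-3)(2N-4)!}{((N-2)!)^{2}}$. Adding the two contributions, the binomial terms cancel and leave $B_0(N,2)=(N-1)\cdot 4^{N-2}$, closing the induction.

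I expect the only delicate point to be the bookkeeping in this cancellation: one must track the exact constants produced by \eqref{eq:A_0_1} and \eqref{eq:A_k_k} — in particular getting the factorials in $A(N-3,0,0)$ right, since the relevant factor is $(N-3)!$ there — and then recognize that the boundary term $\frac{N(N-1)}{2}B_0(N-1)$ matches the leftover binomial term precisely. There is no combinatorial subtlety beyond what Theorem~\ref{th:B_0_2_rec} already supplies; the work is purely the algebraic simplification, entirely analogous to the computation in the proof of Theorem~\ref{th:eps_0_2_form}.
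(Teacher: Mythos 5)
Your proof is correct, but it takes a genuinely different route from the paper's. You prove the closed form $B_0(N,2)=(N-1)\cdot 4^{N-2}$ head-on, by induction through the recursion \eqref{eq:B_0_2_rec}, substituting $B_0(i)=\frac{(2i)!}{i!(i+1)!}$ and evaluating the two resulting sums via \eqref{eq:A_0_1} and \eqref{eq:A_k_k}; the identity $B_0(N,2)=\varepsilon_0(N-1,2)$ then comes as an afterthought from Theorem~\ref{th:eps_0_2_form}. The paper does the opposite: it proves $B_0(N+1,2)=\varepsilon_0(N,2)$ by induction with no summation algebra at all, matching \eqref{eq:B_0_2_rec} term by term against \eqref{eq:eps_0_2_rec} --- the convolutions agree because $B_0(i)=\varepsilon_0(i)$ by \eqref{eq:eps_B_0} together with the induction hypothesis, and the boundary terms agree because of the Catalan-number identity $\frac{(N+1)N}{2}\varepsilon_0(N)=N(2N-1)\varepsilon_0(N-1)$ --- and only then invokes Theorem~\ref{th:eps_0_2_form} for the explicit value. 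The paper's argument is shorter and more structural (it isolates exactly why the two sequences coincide, which is also what motivates its closing remark about seeking a bijection), while yours redoes the Theorem~\ref{th:eps_0_2_form}-style computation but requires noticing nothing about the shape of \eqref{eq:eps_0_2_rec}. Your intermediate values do check out: the convolution term equals $(N-1)4^{N-2}-\frac{(2N-3)(2N-4)!}{((N-2)!)^{2}}$, and $\frac{N(N-1)}{2}B_0(N-1)$ is exactly that correction, so the cancellation closes the induction. One trap you implicitly avoided deserves mention: \eqref{eq:A_k_k} as printed contains $p_k(M)$ where the correct evaluation (the one the paper itself uses in every application) requires $p_{k+1}(M)$; your insistence that $A(N-3,0,0)$ carries $(N-3)!$ in the denominator is the correct value, whereas taking \eqref{eq:A_k_k} at face value would have destroyed the cancellation.
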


\begin{proof}
We prove by induction, that $B_0(N,2)=\varepsilon_0(N-1,2)$. The base for $N=1$ is obvious. For $N=2$ it follows from formulas~\eqref{eq:B_0_2_rec}, \eqref{eq:eps_B_0} and \eqref{eq:eps_0_2}  that 
$B_0(2,2)=\varepsilon_0(1,2)=1$. 

Let's prove the induction step.
Let the equality $B_0(i,2)=\varepsilon_0(i-1,2)$ holds for all $i\leq N$. We will prove that this equality holds for~$N+1$.  Note, that by formula~\eqref{eq:eps_B_0} for all $i\ge0$ we have $\varepsilon_0(i)=B_0(i)$. 
Moreover, it follows from formula~\eqref{eq:eps_B_0}, that
$$
  \frac{(N+1)N}{2}B_0(N)=\frac{(N+1)N}{2}\varepsilon_0(N)=N(2N-1)\varepsilon_0(N-1).
$$
Then by formulas~\eqref{eq:B_0_2_rec}, \eqref{eq:eps_0_2_rec} and by induction assumption we have, that  
\begin{eqnarray*}
B_0(N+1,2)=2\sum_{i=0}^{N-2}{B_0(i)B_0(N-i,2)} + \frac{(N+1)N}{2}B_0(N)=\\
=2\sum_{i=0}^{N-2}{\varepsilon_0(i)\varepsilon_0(N-i-1,2)} + N(2N-1)\varepsilon_0(N-1)=\\
=\varepsilon_0(N,2).
\end{eqnarray*}
\end{proof}

\begin{rem}
It  would be interesting to find a bijective proof of the identity $B_0(N,2)=\varepsilon_0(N-1,2)$.
Unfortunately we do not know such a proof.
\end{rem}

\section{Gluing together  three polygons into a sphere}

In this section we prove recurrence and explicit formulas for the numbers~$\varepsilon_0(N,3)$. 
Remind, that $\varepsilon_0(N,3)$  is the number of marked maps on a sphere,  that contain $N$ edges and three faces.

\subsection{Recursion}

In this section we need an auxiliary notion.

\begin{definition}
Denote by~$\tilde{\varepsilon}_0(N,M,K)$ the number of marked maps on a sphere with $N$ arcs (i.\,e. $N/2$ edges) and $K$ faces, such that the face number~1 contains exactly $M$ edges.
\end{definition}

Clearly, $\tilde{\varepsilon}_0(N,M,K)$ is also the number of ways to glue a sphere from $K$ polygons, that contain together~$N$ edges where the first polygon contains~$M$ edges.

Note, that  $\tilde{\varepsilon}_0(N,M,K)>0$  if and only if three following conditions hold:
\begin{enumerate}
\item[(i)] $N, M, K \geq 1$; 
\label{cond:1}
\item[(ii)]  $N$ is even;
\label{cond:2}
\item[(iii)] $N\geq M+K-1$.
\label{cond:3}
\end{enumerate}

The condition (iii) follows from the fact that the face number  1  contains  $M$ arcs and each of other faces must have at least one arc.

\begin{lemma}
Let  numbers $N>2$, $K=2$ and $M$ satisfy the conditions {\em (i)-(iii)}. Then the following recursion holds:
\begin{multline}
\tilde{\varepsilon}_0(N,M,2)=2\sum_{i=0}^{\left\lfloor\frac{M-3}{2}\right\rfloor}
{\varepsilon _0(i)\tilde{\varepsilon}_0(N-2i-2,M-2i-2,2)}+\\+(N-M)\varepsilon_0\left(\frac{N}{2}-1\right).
\label{eq:eps_N_M_rec}
\end{multline} 
\end{lemma}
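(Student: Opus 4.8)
The plan is to mimic the structure of the proof of Theorem~\ref{th:eps_0_2_rec}, but now tracking the refinement data: the number $M$ of edges in face~1. I consider all marked maps on a sphere with $N$ arcs and $K=2$ faces whose face~1 has exactly $M$ edges, and apply the operation of deleting the edge marked~1 to each. Since the genus is $0$, case~\textbf{3} cannot occur, so every such map falls into case~\textbf{1}, \textbf{2}, or~\textbf{4}. The task is to see which resulting maps (and with what multiplicity, via Lemmas~\ref{l1_1}, \ref{l1_2}, \ref{l1_4}) belong to the right refined families, and to read off the recursion.

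First I would treat case~\textbf{4} (the disconnecting case), which should produce the sum. Deleting edge~1 splits the sphere into an ordered pair of spheres carrying together $N-2$ arcs and $3$ faces. By Lemma~\ref{l1_4}, with $K+1=3$ and one of the two maps having a single face, each such pair is obtained exactly $C_{1}^{0}=1$ time. The subtlety is the bookkeeping of $M$: face~1 of the original map, which has $M$ edges, gets cut along the two arcs $e_1,e_1'$ into the two descendant faces numbered~1. If the single-faced descendant map has $i$ edges, then (since $e_1,e_1'$ are removed) it contributes $2i+2$ of the $2M$ boundary positions, and the two-faced descendant inherits a face~1 with $M-2i-2$ edges. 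Summing over $i$ from $0$ up to $\lfloor (M-3)/2\rfloor$ (the range forcing the other map's face~1 to retain at least one edge), and noting the factor~$2$ from the ordering of the pair together with $\varepsilon_0(i)$ choices for the one-faced part, yields the term $2\sum_{i}\varepsilon_0(i)\,\tilde\varepsilon_0(N-2i-2,M-2i-2,2)$.

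Next I would handle cases~\textbf{1} and~\textbf{2} together, since both produce a marked map on the sphere with $N-2$ arcs and one face, i.e.\ a one-face gluing counted by $\varepsilon_0(N/2-1)$. The point is to count, across all initial maps with the prescribed $M$, how many times each such one-faced target arises. By Lemma~\ref{l1_1} (with $K-1=1$) the case-\textbf{1} contribution is $\tfrac{(M'+1)(M'+2)}{2}$ per target, and Lemma~\ref{l1_2} gives a factor~$2$ per target in case~\textbf{2}; but the explicit statement of those lemmas fixes the \emph{target's} face size, not the original $M$. So instead I would run the reconstruction the other way: fix the one-faced target on $N/2-1$ edges, and count how many original maps with face~1 having exactly $M$ edges delete down to it. This is a direct arc-position count inside the single face of the target — choosing where to reinsert edge~1 so that the newly created face~1 acquires exactly $M$ edges — and I expect it to collapse to the clean coefficient $(N-M)$, independent of the internal structure of the target. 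Combining, these two cases contribute $(N-M)\,\varepsilon_0(N/2-1)$.

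The main obstacle will be the last paragraph's counting: verifying that the combined cases~\textbf{1} and~\textbf{2} give precisely $(N-M)$ and not some target-dependent quantity. One must carefully follow the marking conventions from the operation's description (which arc becomes the new mark~1 in each subcase of case~\textbf{1}, and the two choices $uv$ vs.\ $vu$ in case~\textbf{2}) and confirm that the number of reinsertion positions yielding face~1 of size exactly $M$ depends only on $N$ and $M$. Once the three contributions are assembled and summed, they give exactly~\eqref{eq:eps_N_M_rec}, completing the proof.
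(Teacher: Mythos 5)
Your overall strategy (apply the edge-deletion operation to all two-faced spherical maps whose face~1 has $M$ arcs and sort the outcomes by cases) is exactly the paper's, and your final formula is the right one, but your case bookkeeping is crossed in two places, and the errors only appear to cancel. First, case \textbf{2} (the arcs $e_1,e_1'$ successive in face~1) does \emph{not} produce a one-faced map: there the deleted edge has an endpoint of degree~1, and removing it preserves the number of faces, so the result is a map with $N/2-1$ edges and still \emph{two} faces, whose face~1 now has $M-2$ arcs. By Lemma~\ref{l1_2} each such map arises exactly twice, so case \textbf{2} contributes $2\tilde{\varepsilon}_0(N-2,M-2,2)$ --- which is precisely the $i=0$ term of the sum in \eqref{eq:eps_N_M_rec}, since $\varepsilon_0(0)=1$. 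Second, your case-\textbf{4} sum cannot start at $i=0$: Lemma~\ref{l1_4} and the description of the operation require both maps of the ordered pair to have a positive number of edges, so there $i\ge1$; the configuration you would like to call ``$i=0$ in case \textbf{4}'' (one descendant reduced to a single vertex) is exactly case \textbf{2}.

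Consequently the coefficient $N-M$ must come from case \textbf{1} alone, not from cases \textbf{1} and \textbf{2} combined. That is how the paper argues: fix the one-faced target, observe that the initial map is determined by the deleted edge (a pair of vertex positions $a,b$ of the $(N-2)$-gon) together with the arc $e_2$ that was marked~$2$; the subcase $e_2=f_1$ forces $b-a=M-1$ and gives $N-M-1$ choices, and the subcase in which $e_2$ is an arc of the deleted edge itself gives exactly one more, totalling $N-M$. If you executed your plan literally, an honest count would show that case \textbf{2} produces no one-faced targets at all and that no case-\textbf{4} pair has $i=0$; the term $2\tilde{\varepsilon}_0(N-2,M-2,2)$ would then be missing from your recursion. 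So the gap is real: the argument closes only after you reclassify case \textbf{2} as the source of the $i=0$ term of the sum.
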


\begin{proof}
Consider all marked maps on a sphere with  $N>2$ arcs ($N/2$ edges) and 2 faces,  such that the face number 1 contains $M$ arcs.
Let's apply to each of this maps the operation of deleting an edge. Consider three following cases.

\noindent
\begin{figure}[!hb]
\centerline{\includegraphics{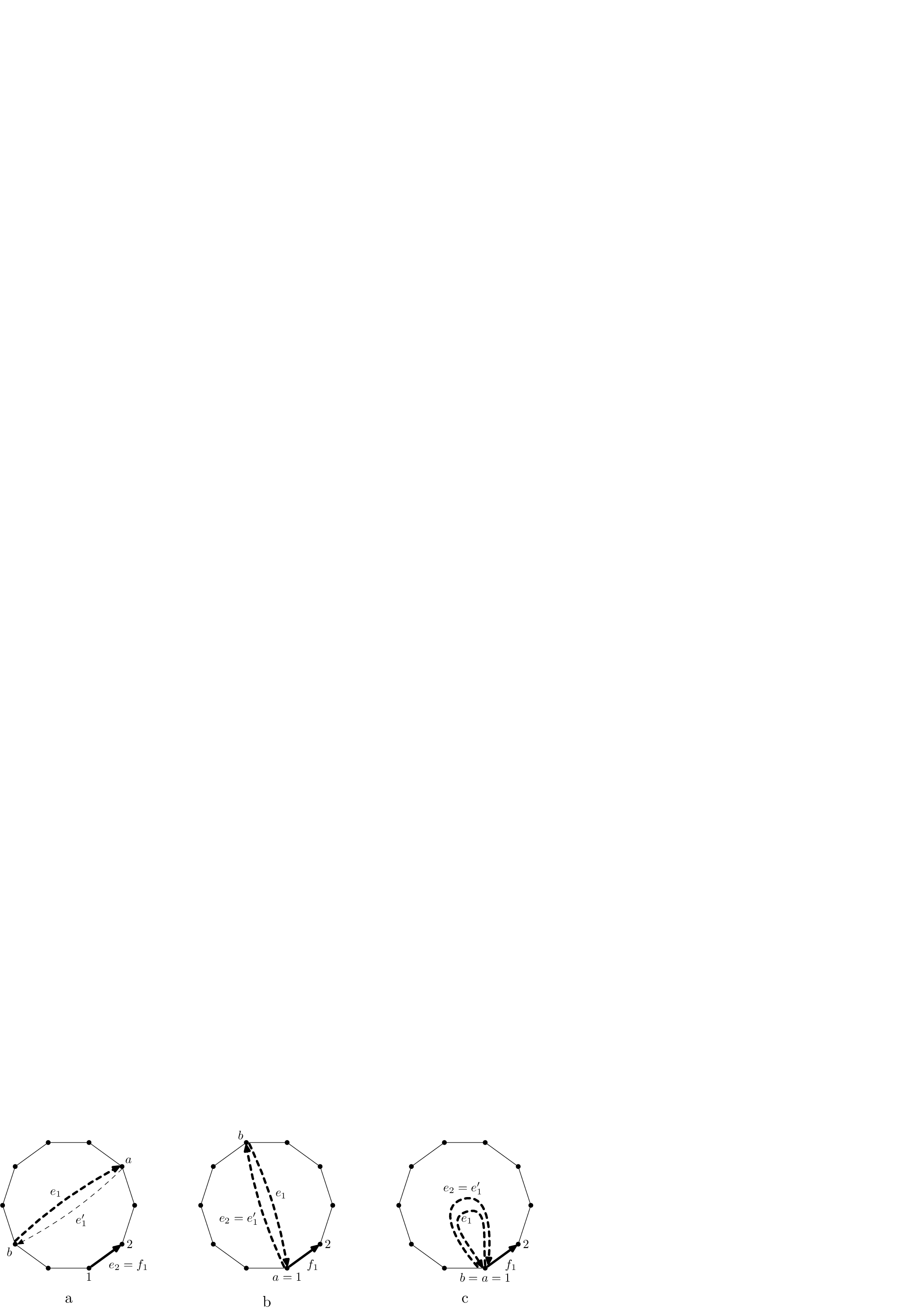}}
\caption{The operation of deleting an edge. Fixed number of arcs of the face~1 of the obtained map.
Case \textbf{1}.}
\label{ris5}
\end{figure}

\q1. \textit{Two arcs of the deleted edge  $\tilde e_1$ belong to different faces.} 

\noindent In this case we obtain a marked map on a sphere with $N/2-1$ edges and one face. Let's count how many times  such a map occurs.

Let $(X,G_1)$ be a marked map with $N/2-1$ edges and one face;  $F_1$ be its only face; $D_1$ be a ${(N-2)}$-gon, correspondent to the face~$F_1$ and $f_1$ be the arc, marked with~1. Similarly to the proof of lemma~\ref{l1_1} let's note, that the initial map (from which  the map $(X,G_1)$ was obtained) is uniquely defined by the deleted edge $\tilde e_1$ (i.e.\ by a pair of vertices of the polygon~$D_1$) and by the arc~$e_2$, marked with~2 in the initial map. 

Let's enumerate vertices of the polygon~$D_1$  with numbers from 1 to $N-2$ clockwise, starting with the beginning of its marked edge. Then the edge~$\tilde e_1$ is defined uniquely up to isomorphism by a pair of numbers
$a,b\in\{1,\ldots,N-2\}$, where $b\ge a$. We consider two subcases. 

\q{1.1}. \textit{$e_2=f_1$.}  In this case any pair~$a,b$ such that
$b-a=M-1$ can be corresponded to the edge $\tilde e_1$ (see figure~\ref{ris5}a). There are $N-M-1$ such pairs.

\q{1.2}. \textit{$e_2$ is an arc of the edge~$\tilde e_1$.} In this case~$a=1$. Let $M>1$. Then $b=N-M$ and the arc marked with 1 in the initial map must precede the arc~$f_1$ (see figure~\ref{ris5}b). These conditions define the arc~$e_2$ uniquely. If~$M=1$, then $b=a=1$ and in the initial map the arc $e_2$ precedes the arc~$f_1$, i.e.\ also is uniquely defined (see figure~\ref{ris5}c).

Thus each of $\varepsilon_0\left(\frac{N}{2}-1\right)$ marked maps on a sphere with  $N/2-1$ edges and  one face occurs $N-M$ times. 
Hence, the case  \textbf{1} occurs for $(N-M)\varepsilon_0\left(\frac{N}{2}-1\right)$ initial maps.

\smallskip
\q2. \textit{Two arcs of the deleted edge are successive arcs of the face number~$1$.} 

\noindent In this case we obtain a marked map on a sphere with  $N/2-1$ edges and two faces, the face number~1
contains $M-2$ arcs. By lemma~\ref{l1_2} each such map is obtained exactly twice. Thus  case \textbf{2} occurs for $2\tilde{\varepsilon}_0(N-2,M-2,2)$ initial maps.

\q3. \textit{Two arcs of the deleted edge are unsuccessive arcs of the face number~1.}

\noindent Since $g=0$, the graph becomes disconnected, i.e.\ the case we consider corresponds to the case \textbf{4} of the description of the operation of deleting an edge. In this case we obtain an ordered pair of two marked  maps on a sphere,  that contain together~$N/2-1$ edges and three faces. Each of these faces contains  at least one edge.  Moreover, the first faces of these two maps contain together $M-2$ arcs. By lemma~\ref{l1_4} each such pair occurs exactly once.  It is easy to see, that the number of such ordered pairs where the map with one face contains~$i$ edges is equal to $2\varepsilon_0(i)\tilde{\varepsilon}_0(N-2i-2,M-2i-2,2)$.
Thus,  the case  \textbf{3}  occurs for $2\sum_{i=1}^{\left\lfloor\frac{M-3}{2}\right\rfloor}
{\varepsilon _0(i)\tilde{\varepsilon}_0(N-2i-2,M-2i-2,2)}$ initial maps.

Summing these  results and taking into account that $\varepsilon _0(0)=1$, we obtain the desired formula.
\end{proof}

\begin{theorem}
\label{th:eps_0_3_rec}
For $N\geq 2$ the numbers $\varepsilon_0(N,3)$ satisfy the following recursion:
\begin{multline}
\varepsilon_0(N,3)=2\sum_{i=0}^{N-2}{\Bigl(\varepsilon_0(i)\varepsilon_0(N-i-1,3)+
\varepsilon_0(i,2)\varepsilon_0(N-i-1,2)\Bigr)}+ \\
+\sum_{i=1}^{2N-3}{(i+1)(i+2)\tilde{\varepsilon}_0(2N-2,i,2)}.
\label{eq:eps_0_3_rec}
\end{multline} 
\end{theorem}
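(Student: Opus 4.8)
The plan is to apply the operation of deleting the edge marked with~$1$ to each of the $\varepsilon_0(N,3)$ marked maps on a sphere with $N$ edges and three faces, and to compute, case by case, how many initial maps fall into each case. Since the surface is a sphere, case~\textbf{3} of the operation cannot occur: it would produce a connected map of genus $g-1=-1$. Hence every initial map falls into exactly one of cases~\textbf{1}, \textbf{2}, \textbf{4}, and for each case the corresponding lemma of Section~1.3 tells us, for a fixed target map or pair, the size of its fibre under the operation. The number of initial maps belonging to a case is therefore the sum of these fibre sizes over all admissible targets.

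First I would handle case~\textbf{1}. Here $e_1$ and $e_1'$ lie in different faces, so deleting $\tilde e_1$ glues two faces of the $3$-face map into one and yields a marked map on a sphere with $N-1$ edges and $2$ faces whose face number~$1$ contains some number $M$ of arcs; the number of such targets is $\tilde\varepsilon_0(2N-2,M,2)$. By lemma~\ref{l1_1} with $K=3$ each such target is obtained $\frac{(M+1)(M+2)\cdot 2}{2}=(M+1)(M+2)$ times. Summing over the admissible range $1\le M\le 2N-3$, dictated by the fact that both faces of a $2$-face map on $2N-2$ arcs carry at least one arc, produces the contribution $\sum_{i=1}^{2N-3}(i+1)(i+2)\tilde\varepsilon_0(2N-2,i,2)$, which is exactly the last sum of the asserted recursion.

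Next I would treat cases~\textbf{2} and~\textbf{4} together. In case~\textbf{2} deleting $\tilde e_1$ removes a degree-one vertex and gives a marked map on a sphere with $N-1$ edges and three faces; by lemma~\ref{l1_2} each is obtained exactly twice, contributing $2\varepsilon_0(N-1,3)$. In case~\textbf{4} the graph splits and we obtain an ordered pair of marked maps on spheres having together $N-1$ edges and four faces, each map with a positive number of edges; by lemma~\ref{l1_4} with $K=3$ a pair whose first map has $K_1$ faces is obtained $C_2^{K_1-1}$ times. The admissible face distributions $(K_1,K_2)\in\{(1,3),(2,2),(3,1)\}$ thus carry multiplicities $1,2,1$; writing $i$ for the number of edges of the first map, these give $\sum_i\varepsilon_0(i)\varepsilon_0(N-1-i,3)$, $2\sum_i\varepsilon_0(i,2)\varepsilon_0(N-1-i,2)$, and $\sum_i\varepsilon_0(i,3)\varepsilon_0(N-1-i)$ respectively. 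The substitution $i\mapsto N-1-i$ identifies the $(1,3)$ and $(3,1)$ sums, so case~\textbf{4} contributes $2\sum_i\varepsilon_0(i)\varepsilon_0(N-1-i,3)+2\sum_i\varepsilon_0(i,2)\varepsilon_0(N-1-i,2)$.

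Finally I would assemble the three contributions. Using $\varepsilon_0(0)=1$, the case~\textbf{2} term $2\varepsilon_0(N-1,3)$ is precisely the $i=0$ summand of $2\sum_i\varepsilon_0(i)\varepsilon_0(N-1-i,3)$, while the conventions $\varepsilon_0(0,2)=0$ and $\varepsilon_0(1,3)=0$ (the latter from remark~\ref{rem:zero}) allow me to extend both convolutions to the uniform range $0\le i\le N-2$ without altering their values. Adding the case~\textbf{1} sum then yields the stated recursion. The step demanding the most care is the bookkeeping in case~\textbf{4}: reading off the correct multiplicity $C_2^{K_1-1}$ from lemma~\ref{l1_4} for each face distribution, exploiting the ordered-pair symmetry to merge the $(1,3)$ and $(3,1)$ contributions into a single factor~$2$, and verifying that the edge- and face-count ranges are consistent with the vanishing conventions so that the summation limits can be written uniformly.
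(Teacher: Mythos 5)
Your proof is correct and follows essentially the same route as the paper: the same edge-deletion operation, the same observation that case \textbf{3} is impossible on the sphere, and the same application of lemmas~\ref{l1_1}, \ref{l1_2} and~\ref{l1_4} (with the face-distribution multiplicities $C_2^{K_1-1}$ and the conventions $\varepsilon_0(0)=1$, $\varepsilon_0(0,2)=\varepsilon_0(1,3)=0$ used to unify the summation ranges). The only cosmetic difference is that you enumerate the distributions $(1,3)$, $(2,2)$, $(3,1)$ explicitly and merge the outer two by the substitution $i\mapsto N-1-i$, whereas the paper folds both orderings into the single count $2\varepsilon_0(i)\varepsilon_0(N-i-1,3)$ from the start.
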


\begin{proof}
Consider all marked maps on a sphere, that contain  $N>1$ edges and  3 faces and apply to each of them the operation of deleting an edge. For any such map we have  exactly one of the following three cases.

\q1. \textit{Two arcs of the deleted edge~$\tilde e_1$ belong to different faces.} 

\noindent In this case we obtain a marked map on a sphere with $N-1$ edges and two faces. Let the first face contains $i$ arcs (Clearly,  
$1\le i\le 2N-3$). By lemma~\ref{l1_1} each of $\tilde{\varepsilon}_0(2N-2,i,2)$ such maps is obtained  exactly  $(i+1)(i+2)$ times. Thus, case \textbf{1} occurs for $\sum_{i=1}^{2N-3}{(i+1)(i+2)\tilde{\varepsilon}_0(2N-2,i,2)}$ initial maps.

\q2. \textit{Two arcs of the deleted edge are successive arcs of the face number~$1$.} 

\noindent
In this case we obtain a marked map on a sphere with $N-1$ edges and three faces. By lemma~\ref{l1_2} 
each of  $\varepsilon_0(N-1,3)$ such maps is obtained exactly twice. Thus, case \textbf{2} occurs for $2\varepsilon_0(N-1,3)$ initial maps.

\q3. \textit{Two arcs of the deleted edge are unsuccessive arcs of the face number~$1$.}

\noindent
Since $g=0$, the graph becomes disconnected, i.e.\ the case we consider corresponds to the case \textbf{4} of the description of the operation of deleting an edge. In this case we obtain an ordered pair of marked maps on a sphere, that contain together  $N-1$ edges and four faces. Such pair of maps can be of two types: one map of the pair contains one face and the other map contains three faces or both maps contain two faces. Consider these two subcases in details.

\q{3.1}. \textit{One map of the pair  contains one face, the other map contains three faces.} 

\noindent Let the map with one face contains  $i$ edges. By remark~\ref{rem:zero} the map with three faces contains at least 2 edges, hence, $1\le i\le N-3$. By lemma~\ref{l1_4} each of $2\varepsilon_0(i)\varepsilon_0(N-i-1,3)$ such pairs of maps occurs once. Hence, case 3.1 occurs for $2\sum_{i=1}^{N-3}{\varepsilon_0(i)\varepsilon_0(N-i-1,3)}$ initial maps.

\q{3.2}. \textit{Both maps of the pair contain two faces.} 

\noindent Let the first map of this pair contains $i$ edges. Clearly $1\le i\le N-2$. 
By lemma~\ref{l1_4} each of $\varepsilon_0(i,2)\varepsilon_0(N-i-1,2)$ such pairs of maps occurs twice. Hence, case 3.2 occurs for  $2\sum_{i=1}^{N-2}{\varepsilon_0(i,2)\varepsilon_0(N-i-1,2)}$ initial maps.

Summing these results and taking into account that $\varepsilon _0(0)=1$ and
$\varepsilon _0(0,2)=\varepsilon _0(1,3)=0$, we obtain the desired formula.
\end{proof}

\subsection{Explicit formula}

Before deducing an explicit formula for the numbers of gluing together three polygons into a sphere we prove some auxiliary lemmas.

\begin{lemma}
For any $N\geq 1$ the following equality holds: 
\begin{equation}
\sum_{i=1}^{2N-1}{\tilde{\varepsilon}_0(2N,i,2)}=\varepsilon_0(N,2)=N 4^{N-1}.
\label{eq:sum_eps}
\end{equation} 
\end{lemma}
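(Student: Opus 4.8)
The plan is to recognize this identity as nothing more than a partition of the counting set defining $\varepsilon_0(N,2)$ according to the size of the distinguished face, followed by an appeal to the already-established explicit formula. First I would recall that $\varepsilon_0(N,2)$ is by definition the number of marked maps on a sphere with $N$ edges and exactly two faces, and that each such map carries $2N$ arcs. The refined quantity $\tilde{\varepsilon}_0(2N,i,2)$ counts precisely those maps among these for which the distinguished face (face number~$1$) is bounded by exactly $i$ arcs. So the summand is a genuine refinement of $\varepsilon_0(N,2)$ by the cardinality of face~$1$.

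Next I would observe that in any such map the $2N$ arcs are partitioned between the two faces: if face~$1$ contains $i$ arcs, then face~$2$ contains the remaining $2N-i$ arcs. Since every face of a map is bounded by at least one arc, the integer $i$ must satisfy $1\le i\le 2N-1$; equivalently, by conditions (i)--(iii) this is exactly the range of $i$ for which $\tilde{\varepsilon}_0(2N,i,2)$ can be nonzero (the relevant instance of condition (iii) reads $2N\ge i+1$). Consequently the families of maps enumerated by $\tilde{\varepsilon}_0(2N,i,2)$ for $i=1,\dots,2N-1$ are pairwise disjoint, and their union is the whole set of marked maps on a sphere with $N$ edges and two faces.

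Therefore $\sum_{i=1}^{2N-1}\tilde{\varepsilon}_0(2N,i,2)=\varepsilon_0(N,2)$, and the closed form $N\cdot 4^{N-1}$ then follows immediately from Theorem~\ref{th:eps_0_2_form} (formula~\eqref{eq:eps_0_2}). I do not expect any genuine obstacle here: the argument is purely a bookkeeping statement that summing a refinement recovers the total. The only point deserving a moment of care is verifying that the summation range $1\le i\le 2N-1$ neither omits any feasible map nor introduces spurious zero terms, and this is immediate once one notes that the two faces share the $2N$ arcs and each is nonempty.
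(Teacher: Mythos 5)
Your argument is correct and is exactly the paper's proof: the paper disposes of this lemma in one line by noting it ``follows immediately from the definitions of $\tilde{\varepsilon}_0(2N,i,2)$ and $\varepsilon_0(N,2)$'' together with formula~\eqref{eq:eps_0_2}, which is precisely your refinement-and-sum bookkeeping followed by the appeal to Theorem~\ref{th:eps_0_2_form}. You merely spell out the details (disjointness, the range $1\le i\le 2N-1$) that the paper leaves implicit.
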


\begin{proof}
This statement follows immediately from the definitions of $\tilde{\varepsilon}_0(2N,i,2)$ and $\varepsilon_0(N,2)$ and from formula~\eqref{eq:eps_0_2}.
\end{proof} 

\begin{lemma}
\label{lem:sum_i_eps}
For any $N\geq 1$ the following equality holds: 
\begin{equation}
\sum_{i=1}^{2N-1}{i\tilde{\varepsilon}_0(2N,i,2)}=N^2 4^{N-1}.
\label{eq:sum_i_eps}
\end{equation} 
\end{lemma}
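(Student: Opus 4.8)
The statement to prove is
\begin{equation*}
\sum_{i=1}^{2N-1}{i\,\tilde{\varepsilon}_0(2N,i,2)}=N^2 4^{N-1}.
\end{equation*}

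The plan is to extract the sum $\sum_i i\,\tilde\varepsilon_0(2N,i,2)$ from the recursion for $\tilde\varepsilon_0(N,M,2)$ by multiplying the recursion through by the index $M$ and summing over all admissible $M$. Recall that the previous lemma gives the ``zeroth moment'' $\sum_i \tilde\varepsilon_0(2N,i,2)=N4^{N-1}$; the present statement is the ``first moment'', so the natural approach is to mimic that derivation but carry along the extra factor of $i$ (equivalently $M$). I would proceed by induction on $N$, with the base case $N=1$ checked directly (there the only admissible term is $i=1$, and $\tilde\varepsilon_0(2,1,2)=0$ or is handled by the convention, so both sides are $1$ after adjusting for the smallest case).

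First I would write $S(N):=\sum_{M=1}^{2N-1} M\,\tilde\varepsilon_0(2N,M,2)$ and substitute the recursion~\eqref{eq:eps_N_M_rec} for each $\tilde\varepsilon_0(2N,M,2)$ with $N$ replaced appropriately (using $2N$ arcs in place of the generic even $N$). This splits $S(N)$ into two pieces: a convolution piece coming from the term $2\sum_i \varepsilon_0(i)\tilde\varepsilon_0(2N-2i-2,M-2i-2,2)$, and a ``diagonal'' piece coming from $(2N-M)\varepsilon_0(N-1)$. For the diagonal piece I would use both the zeroth moment~\eqref{eq:sum_eps} and elementary sums of $\varepsilon_0$, writing $\sum_M M(2N-M)$-type contributions in closed form. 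For the convolution piece the key move is the change of variable $M'=M-2i-2$: inside the inner sum $M$ becomes $M'+2i+2$, so $M\,\tilde\varepsilon_0(\dots)$ splits as $(M')\tilde\varepsilon_0(\dots)+(2i+2)\tilde\varepsilon_0(\dots)$, turning the first moment of the convolved map into a combination of its own first moment $S(N-i-1)$ and its zeroth moment $\sum_{M'}\tilde\varepsilon_0(2(N-i-1),M',2)=(N-i-1)4^{N-i-2}$ from~\eqref{eq:sum_eps}.

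Collecting terms, I expect $S(N)$ to satisfy a recursion of the form
\begin{equation*}
S(N)=2\sum_{i=0}^{N-2}\varepsilon_0(i)\bigl(S(N-i-1)+(i+1)(N-i-1)4^{N-i-2}\bigr)+(\text{diagonal}),
\end{equation*}
after which I substitute the induction hypothesis $S(N-i-1)=(N-i-1)^2 4^{N-i-2}$ and the explicit value $\varepsilon_0(i)=\frac{(2i)!}{i!(i+1)!}$. The resulting sums over $i$ are exactly of the binomial type handled by the corollaries~\eqref{eq:A_0_1} and~\eqref{eq:A_k_k} to the moment lemma, so they evaluate in closed form by the same manipulations used in the proof of Theorem~\ref{th:eps_0_2_form}. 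The main obstacle I anticipate is bookkeeping rather than conceptual: keeping the summation limits consistent under the shift $M'=M-2i-2$ (so that the convention $\tilde\varepsilon_0=0$ outside the admissible region absorbs the boundary terms), and correctly combining the diagonal contribution $(2N-M)\varepsilon_0(N-1)$ after weighting by $M$, since this is where a spurious factor of $N$ versus $N^2$ could creep in. Once these sums are assembled and simplified against the target $N^2 4^{N-1}$ using formulas~\eqref{eq:A_0_1} and~\eqref{eq:A_k_k}, the induction step closes.
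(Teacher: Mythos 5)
Your proposal follows essentially the same route as the paper's own proof: induction on $N$, substitution of the recursion~\eqref{eq:eps_N_M_rec}, splitting into a convolution piece and a diagonal piece, shifting the index so that the weight $M=M'+(2i+2)$ decomposes the first moment into the induction hypothesis plus $(2i+2)$ times the zeroth moment~\eqref{eq:sum_eps}, and closing the resulting sums with~\eqref{eq:A_0_1} and~\eqref{eq:A_k_k}. Only note that in your displayed ``expected'' recursion the zeroth-moment coefficient should be $(2i+2)$, as in your own prose, rather than $(i+1)$, and that the base case holds because $\tilde{\varepsilon}_0(2,1,2)=1$ (the one-edge map with two faces), not $0$; with these corrections the computation closes exactly as in the paper.
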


\begin{proof}
Let $T(N)=\sum_{i=1}^{2N-1}{i\tilde{\varepsilon}_0(2N,i,2)}$.  We prove by induction that $T(N)=N^2 4^{N-1}$.

The base for $N=1$ is obvious. Let's prove the induction step. Let $T(k)=k^2 4^{k-1}$ for each $k<N$. 
We will show, that then $T(N)=N^2 4^{N-1}$. By formula \eqref{eq:eps_N_M_rec} we have:
\begin{align*}
T(N)=\sum_{i=1}^{2N-1}{i\tilde{\varepsilon}_0(2N,i,2)}=&
2\sum_{i=1}^{2N-1}{i\sum_{j=0}^{\left\lfloor\frac{i-3}{2}\right\rfloor}
{\varepsilon_0(j)\tilde{\varepsilon}_0(2N-2j-2,i-2j-2,2)}}+\\
&+\sum_{i=1}^{2N-1}{i(2N-i)\varepsilon_0(N-1)}=T_1(N)+T_2(N),
\end{align*}
where $T_1(N)$ and $T_2(N)$ are the first and the second summands of the obtained expression, respectively.
 
At first we calculate $T_1(N)$. Note, that
\begin{align*}
& T_1(N) =2\sum_{j=0}^{N-2}{\varepsilon_0(j)\sum_{i=2j+3}^{2N-1}{i\tilde{\varepsilon}_0(2N-2j-2,i-2j-2,2)}}.
\end{align*}

Let  $k=i-2j-2$  and $N_j=N-j-1$. 
By the induction assumption and formula~\eqref{eq:sum_eps} we have:
\begin{align*}
 T_1(N)&=2\sum_{j=0}^{N-2}{\varepsilon_0(j)\sum_{k=1}^{2N_j-1}{(k+2j+2)\tilde{\varepsilon}_0(2N_j,k,2)}}=\\
& =2\sum_{j=0}^{N-2}{\varepsilon_0(j)\Bigl(\sum_{k=1}^{2N_j-1}{k\tilde{\varepsilon}_0(2N_j,k,2)}+
(2j+2)\sum_{k=1}^{2N_j-1}{\tilde{\varepsilon}_0(2N_j,k,2)}\Bigr)}=\\
& =2\sum_{j=0}^{N-2}{\varepsilon_0(j)\left(N_j^2 4^{N_j-1}+(2j+2)N_j 4^{N_j-1}\right)}=\\
&=2^{2N-3}\sum_{j=0}^{N-2}{\frac{\varepsilon_0(j)}{4^j}(N^2-(j+1)^2)}.
\end{align*}

Applying  formulas~\eqref{eq:eps_B_0}, \eqref{eq:A_0_1} and formula \eqref{eq:A_k_k} for  $k=0,1$, we have:
\begin{align*}
T_1(N)&=2^{2N-3}\sum_{j=0}^{N-2}{\frac{(2j)!}{4^j j!(j+1)!}(N^2-(j+1)^2)}=\\
& =2^{2N-3}\biggl(N^2\sum_{j=0}^{N-2}{\frac{(2j)!}{4^j j!(j+1)!}}-\sum_{j=0}^{N-2}{\frac{(2j)!}{4^j j!j!}}-
\sum_{j=1}^{N-2}{\frac{(2j)!}{4^j j!(j-1)!}}\biggr)=\\
& =2^{2N-3}\left(N^2 \left(2-\frac{2N(2N-2)!}{4^{N-1}(N-1)!N!}\right)-
\frac{2(N-1)(2N-2)!}{4^{N-1}(N-1)!(N-1)!}-\right.\\
&\left.\phantom{=2^{2N-3}\left(\ \right.}-\frac{2}{3}\cdot\frac{(N-1)(N-2)(2N-2)!}{4^{N-1}(N-1)!(N-1)!}\right)=\\
&=N^2 4^{N-1}-\varepsilon_0(N-1)\frac{N(4N^2-1)}{3}.
\end{align*}
 
Now let's calculate $T_2(N)$:
$$
T_2(N)=\sum_{i=1}^{2N-1}{i(2N-i)\varepsilon_0(N-1)}=\varepsilon_0(N-1)\frac{N(4N^2-1)}{3}.
$$

Summing the expressions for $T_1(N)$ and $T_2(N)$ we obtain the desired formula.
\end{proof}

\begin{lemma}
For any $N\geq 1$ the following equality holds: 
\begin{equation}
\sum_{i=1}^{2N-1}{(i+1)(i+2)\tilde{\varepsilon}_0(2N,i,2)}=4^{N-2}N(N+1)(5N+7).
\label{eq:sum_i_i_eps}
\end{equation} 
\end{lemma}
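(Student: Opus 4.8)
The plan is to express the target sum $\sum_{i=1}^{2N-1}{(i+1)(i+2)\tilde{\varepsilon}_0(2N,i,2)}$ as a linear combination of quantities I already control, namely the two sums computed just above in formula~\eqref{eq:sum_eps} and lemma~\ref{lem:sum_i_eps}. Expanding $(i+1)(i+2)=i^2+3i+2$ gives
\begin{equation*}
\sum_{i=1}^{2N-1}{(i+1)(i+2)\tilde{\varepsilon}_0(2N,i,2)}=
\sum_{i=1}^{2N-1}{i^2\tilde{\varepsilon}_0(2N,i,2)}+3N^2 4^{N-1}+2N 4^{N-1},
\end{equation*}
using $\sum i\tilde{\varepsilon}_0=N^2 4^{N-1}$ and $\sum\tilde{\varepsilon}_0=N 4^{N-1}$. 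So the whole problem reduces to evaluating the one new quantity $S(N):=\sum_{i=1}^{2N-1}{i^2\tilde{\varepsilon}_0(2N,i,2)}$. I expect the main obstacle to be exactly this: computing the second moment $S(N)$, since the first moment already required a full inductive argument in lemma~\ref{lem:sum_i_eps}.

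The approach I would take for $S(N)$ mirrors the proof of lemma~\ref{lem:sum_i_eps} verbatim. I would set up an induction on $N$, substitute the recursion~\eqref{eq:eps_N_M_rec} for $\tilde{\varepsilon}_0(2N,i,2)$, and split the result into $S_1(N)+S_2(N)$ corresponding to the two summands of~\eqref{eq:eps_N_M_rec}. For $S_1(N)$ I would interchange the order of summation (sum over $j$ outside), make the same substitution $k=i-2j-2$, $N_j=N-j-1$, and then expand $i^2=(k+2j+2)^2=k^2+2(2j+2)k+(2j+2)^2$. The inner sums are then $\sum_k k^2\tilde{\varepsilon}_0(2N_j,k,2)$, $\sum_k k\tilde{\varepsilon}_0$, and $\sum_k\tilde{\varepsilon}_0$, which by the induction hypothesis for $S$ and by~\eqref{eq:sum_eps} and lemma~\ref{lem:sum_i_eps} are known in closed form. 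This leaves a single sum over $j$ of the shape $\sum_{j}\frac{(2j)!}{4^j j!(j+1)!}\cdot(\text{polynomial in }j)$, which I would resolve using the convolution/summation identities~\eqref{eq:A_0_1} and~\eqref{eq:A_k_k} for $k=0,1,2$ (possibly $k=3$, since the second moment may raise the polynomial degree in $j$ by one compared with the first-moment computation).

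For the elementary piece $S_2(N)=\sum_{i=1}^{2N-1}{i^2(2N-i)\varepsilon_0(N-1)}$ I would just evaluate the polynomial sum $\sum_{i=1}^{2N-1}(2Ni^2-i^3)$ by the standard formulas for $\sum i^2$ and $\sum i^3$; this contributes a term proportional to $\varepsilon_0(N-1)$. As in lemma~\ref{lem:sum_i_eps}, I anticipate that the ``error'' term $\varepsilon_0(N-1)\cdot(\text{polynomial})$ produced by $S_1(N)$ and the one produced by $S_2(N)$ cancel, leaving a clean closed form for $S(N)$. Adding $3N^2 4^{N-1}+2N 4^{N-1}$ to that closed form and simplifying should yield $4^{N-2}N(N+1)(5N+7)$. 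The only genuine risk is bookkeeping in the $j$-sum and in identifying the correct instances of~\eqref{eq:A_k_k}; the structural steps are already validated by the preceding two lemmas, so I would lean on them as templates rather than re-deriving the machinery.
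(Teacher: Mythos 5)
Your proposal is correct and follows essentially the same route as the paper's proof: induction on $N$ via the recursion~\eqref{eq:eps_N_M_rec}, interchanging the order of summation with the substitution $k=i-2j-2$, $N_j=N-j-1$, evaluating the inner sums by the induction hypothesis together with~\eqref{eq:sum_eps} and lemma~\ref{lem:sum_i_eps}, resolving the outer $j$-sum with~\eqref{eq:A_0_1} and~\eqref{eq:A_k_k} for $k=0,1,2$, and observing that the $\varepsilon_0(N-1)$-proportional terms cancel against the elementary polynomial sum. The only difference --- running the induction on the second moment $\sum i^2\tilde{\varepsilon}_0(2N,i,2)$ (whose closed form $4^{N-2}N(5N^2-1)$ follows from the target) rather than directly on the weight $(i+1)(i+2)$ as the paper does --- is cosmetic, since the paper performs the analogous moment expansion inside its own computation.
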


\begin{proof}
Denote the left part of the formula~\eqref{eq:sum_i_i_eps} by $P(N)$ and the right part by $\tilde{P}(N)$. We will prove by induction that $P(N)=\tilde{P}(N)$.

The base for $N=1$  can be immediately verified. Let's prove the induction step.
The induction assumption is that $P(k)=\tilde{P}(k)$ for all $k<N$.   Let's show that then $P(N)=\tilde{P}(N)$.
By formula \eqref{eq:eps_N_M_rec} we have:
\begin{align*}
P(N)& =\sum_{i=1}^{2N-1}{(i+1)(i+2)\tilde{\varepsilon}_0(2N,i,2)}=\\
& =\sum_{i=1}^{2N-1}{(i+1)(i+2)\cdot 2\sum_{j=0}^{\left\lfloor\frac{i-3}{2}\right\rfloor}
{\varepsilon_0(j)\tilde{\varepsilon}_0(2N-2j-2,i-2j-2,2)}}+\\
&\phantom{=\ } +\sum_{i=1}^{2N-1}{(i+1)(i+2)(2N-i)\varepsilon_0(N-1)}=P_1(N)+P_2(N),
\end{align*}
where  $P_1(N)$ and $P_2(N)$ are the first and  the second summands of the obtained expression, respectively.

At first we calculate $P_1(N)$. Note, that
\begin{equation*}
P_1(N)=2\sum_{j=0}^{N-2}{\varepsilon_0(j)\biggl(\sum_{i=2j+3}^{2N-1}{(i+1)(i+2)\tilde{\varepsilon}_0(2N-2j-2,i-2j-2,2)}\biggr)}.
\end{equation*}
As in lemma \ref{lem:sum_i_eps} we set $k=i-2j-2$; $N_j=N-j-1$. 
Then
\begin{align*}
P_1(N)&=2\sum_{j=0}^{N-2}{\varepsilon_0(j)\biggl(\sum_{k=1}^{2N_j-1}{(k+1+2j+2)(k+2+2j+2)\tilde{\varepsilon}_0(2N_j,k,2)}\biggr)}=\\
& =2\sum_{j=0}^{N-2}{\varepsilon_0(j)\sum_{k=1}^{2N_j-1}{(k+1)(k+2)\tilde{\varepsilon}_0(2N_j,k,2)}}+\\
&\phantom{=\ } +2\cdot 4\sum_{j=0}^{N-2}{\varepsilon_0(j)(j+1)\sum_{k=1}^{2N_j-1}{k\tilde{\varepsilon}_0(2N_j,k,2)}}+\\
&\phantom{=\ } +2\cdot 2\sum_{j=0}^{N-2}{\varepsilon_0(j)(j+1)(2j+5)\sum_{k=1}^{2N_j-1}{\tilde{\varepsilon}_0(2N_j,k,2)}}.
\end{align*} 
 
Applying induction assumption and formulas \eqref{eq:sum_i_eps} and \eqref{eq:sum_eps} we have:
\begin{align*}
& P_1(N)=2\sum_{j=0}^{N-2}{\varepsilon_0(j)4^{N_j-2}R_j(N)},
\end{align*} 
where
\begin{eqnarray*}
R_j(N)&=&N_j(N_j+1)(5N_j+7)+16(j+1)N_j^2+8(j+1)(2j+5)N_j=\\
&=&N(N+1)(5N+7)+(N^2-N-24)(j+1)-\\
&&-(N+27)(j+1)j-5(j+1)j(j-1).
\end{eqnarray*}
 
Applying formulas \eqref{eq:eps_B_0}, \eqref{eq:A_0_1} and formula \eqref{eq:A_k_k} for $k=0,1,2$ we have:
\begin{align*}
 P_1(N)&=2\sum_{j=0}^{N-2}{\varepsilon_0(j)4^{N-j-3}R_j(N)}=
2^{2N-5}\sum_{j=0}^{N-2}{\frac{(2j)!}{4^j j!(j+1)!}R_j(N)}=\\
& =2^{2N-5}N(N+1)(5N+7)\sum_{j=0}^{N-2}{\frac{(2j)!}{4^j j!(j+1)!}}+\\
&\phantom{=\ } +2^{2N-5}(N^2-N-24)\sum_{j=0}^{N-2}{\frac{(2j)!}{4^j j!j!}}-\\
&\phantom{=\ } -2^{2N-5}(N+27)\sum_{j=1}^{N-2}{\frac{(2j)!}{4^j j!(j-1)!}}-
2^{2N-5}\cdot 5\sum_{j=2}^{N-2}{\frac{(2j)!}{4^j j!(j-2)!}}=\\
& =2^{2N-5}N(N+1)(5N+7)\left(2-\frac{2N(2N-2)!}{4^{N-1}(N-1)!N!}\right)+\\
&\phantom{=\ } +2^{2N-5}(N^2-N-24)\frac{2(N-1)(2N-2)!}{4^{N-1}(N-1)!(N-1)!}-\\
&\phantom{=\ } -2^{2N-5}(N+27)\frac{2(N-1)(N-2)(2N-2)!}{3\cdot 4^{N-1}(N-1)!(N-1)!}-\\
&\phantom{=\ } -2^{2N-5}\frac{5\cdot 2(N-1)(N-2)(N-3)(2N-2)!}{5 \cdot4^{N-1}(N-1)!(N-1)!}=\\
& =4^{N-2}N(N+1)(5N+7)-\frac{(2N-2)!}{4(N-1)!N!}R(N),
\end{align*}
where
\begin{multline*}
 R(N)=N^2(N+1)(5N+7)-(N^2-N-24)(N-1)N+\\
 +\frac{1}{3}(N+27)(N-2)(N-1)N+(N-3)(N-2)(N-1)N=\\
 =4N\left(\frac{4}{3}N^3+4N^2+\frac{11}{3}N-3\right).
\end{multline*}

By formula \eqref{eq:eps_B_0} we finally obtain:
\begin{align*}
& P_1(N)=4^{N-2}N(N+1)(5N+7)-\varepsilon_0(N-1)N\left(\frac{4}{3}N^3+4N^2+\frac{11}{3}N-3\right).
\end{align*}
 
Now let's calculate $P_2(N)$.
\begin{align*}
 P_2(N)&=\sum_{i=1}^{2N-1}{(i+1)(i+2)(2N-i)\varepsilon_0(N-1)}=\\
& =\varepsilon_0(N-1)\sum_{i=1}^{2N-1}\left(4N+(6N-2)i+(2N-3)i^2-i^3\right)=\\
& =\varepsilon_0(N-1)N\left(\frac{4}{3}N^3+4N^2+\frac{11}{3}N-3\right).
\end{align*} 
 
Summing the expressions for $P_1(N)$ and $P_2(N)$ we obtain the desired formula.
\end{proof}

\begin{theorem}\label{th:eps_0_3}
For $N\geq 1$ the numbers $\varepsilon_0(N,3)$ satisfy  the following equality:
\begin{equation}
\label{eq:eps_0_3}
\varepsilon_0(N,3)=\frac{(8N+5)(N-1)N(N+1)}{210}C_{2N+1}^{N}.
\end{equation}
\end{theorem}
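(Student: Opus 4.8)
The plan is to prove \eqref{eq:eps_0_3} by induction on $N$, using the recursion \eqref{eq:eps_0_3_rec}. The base case $N=1$ is immediate: Remark~\ref{rem:zero} gives $\varepsilon_0(1,3)=0$, and the right-hand side of \eqref{eq:eps_0_3} vanishes because of the factor $(N-1)$ (the value $N=2$ can also be checked directly, as the recursion at $N=2$ only involves already-known vanishing terms). For the inductive step I would assume the formula holds for every argument smaller than $N$ (with $N\ge2$) and evaluate the three summands of \eqref{eq:eps_0_3_rec} separately, writing $\varepsilon_0(N,3)=2S_1+2S_2+S_3$ with $S_1=\sum_{i=0}^{N-2}\varepsilon_0(i)\varepsilon_0(N-i-1,3)$ the self-convolution, $S_2=\sum_{i=0}^{N-2}\varepsilon_0(i,2)\varepsilon_0(N-i-1,2)$, and $S_3=\sum_{i=1}^{2N-3}(i+1)(i+2)\tilde\varepsilon_0(2N-2,i,2)$.

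Two of these are easy and contribute only pure powers of~$4$. The term $S_3$ is exactly the left-hand side of \eqref{eq:sum_i_i_eps} with $N$ replaced by $N-1$, so $S_3=4^{N-3}(N-1)N(5N+2)$. The term $S_2$ is elementary: by \eqref{eq:eps_0_2} each factor equals $i\cdot4^{i-1}$ (and $\varepsilon_0(0,2)=0$), whence $S_2=4^{N-3}\sum_{i=1}^{N-2}i(N-1-i)$, a polynomial sum evaluating to $4^{N-3}\,N(N-1)(N-2)/6$.

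The crux is $S_1$. Here I would use $\varepsilon_0(i)=C_{2i+1}^i/(2i+1)$ together with the induction hypothesis for $\varepsilon_0(N-1-i,3)$. Setting $j=N-1-i$, the summand becomes $\frac{(8j+5)(j-1)j(j+1)}{210}\,C_{2j+1}^{j}\cdot\frac{C_{2i+1}^i}{2i+1}$. The factor $(j-1)j(j+1)$ kills the terms with $j\in\{0,1\}$, and the originally excluded index $i=N-1$ (i.e.\ $j=0$) contributes nothing either, so the sum may be freely extended to $i=0,\dots,N-1$. This is precisely the shape of the quantities $D_k(N-1)$ in \eqref{eq:D_k_N}, since $C_{2j+1}^{j}=C_{2(N-1)-2i+1}^{(N-1)-i}$ matches the binomial factor there. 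Hence, after expanding the degree-four polynomial $\frac{(8j+5)(j-1)j(j+1)}{210}$ (with $j=N-1-i$) in the falling-factorial basis $p_0(i),\dots,p_4(i)$, one obtains $S_1=\sum_{k=0}^4 c_k D_k(N-1)$ for explicit constants $c_k$, after which I would substitute the closed forms \eqref{eq:D_0}--\eqref{eq:D_4} evaluated at $N-1$.

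Finally I would assemble $2S_1+2S_2+S_3$. Each $D_k(N-1)$ splits into a power-of-$4$ part and a part that, using the standard relations among $C_{2N}^{N-1}$, $C_{2N}^{N}$ and $C_{2N+1}^N$, is a rational-polynomial multiple of the single binomial $C_{2N+1}^N$. Because the target is a pure binomial multiple, this gives a built-in consistency check: the power-of-$4$ contributions of $2S_1$ must cancel $2S_2+S_3$, while the binomial contributions of $2S_1$ must collapse to $\frac{(8N+5)(N-1)N(N+1)}{210}C_{2N+1}^N$. The main obstacle is purely computational: carrying out the substitution $j=N-1-i$ and the expansion into the $p_k$ basis to pin down the $c_k$, and then tracking the cancellations in the final simplification. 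No idea beyond the $D_k$ machinery of the preliminary lemmas should be required.
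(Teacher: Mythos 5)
Your proposal is correct and is essentially the paper's own proof: induction on the recursion \eqref{eq:eps_0_3_rec}, evaluating $S_3$ via \eqref{eq:sum_i_i_eps} at $N-1$, $S_2$ elementarily from \eqref{eq:eps_0_2}, and the convolution $S_1$ by substituting the induction hypothesis, expanding the resulting degree-four polynomial in the falling-factorial basis $p_0,\dots,p_4$, and applying the closed forms \eqref{eq:D_0}--\eqref{eq:D_4}, after which the power-of-4 parts cancel against $2S_2+S_3$ and the binomial part collapses to the claimed formula; the paper merely phrases the step as computing $\varepsilon_0(N+1,3)$ via $D_k(N)$ where you compute $\varepsilon_0(N,3)$ via $D_k(N-1)$. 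The only blemish is your parenthetical reason for the base case $N=2$ --- the recursion there does not consist of vanishing terms, since $\tilde{\varepsilon}_0(2,1,2)=1$ gives $\varepsilon_0(2,3)=6$ --- but the direct check $6=\tilde{C}(2)$ goes through exactly as in the paper, so this is harmless.
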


\begin{proof}
Let $\tilde{C}(N)=\frac{(8N+5)(N-1)N(N+1)}{210}C_{2N+1}^{N}$. We  will prove that $\varepsilon_0(N,3)=\tilde{C}(N)$ by induction.

The base for $N=1$  is clear. For $N=2$ it follows from the formulas \eqref{eq:eps_0_3_rec} and \eqref{eq:eps_N_M_rec}  that
 $\varepsilon_0(2,3)=6=\tilde{C}(2)$. 

Let's prove the induction step. Let $\varepsilon_0(k,3)=\tilde{C}(k)$ for  $k\leq N$. We will show, that then $\varepsilon_0(N+1,3)=\tilde{C}(N+1)$. By formula \eqref{eq:eps_0_3_rec} we have:
\begin{align*}
 \varepsilon_0(N+1,3)&=2\sum_{i=0}^{N-1}{\varepsilon_0(i)\varepsilon_0(N-i,3)}+ 
2\sum_{i=0}^{N-1}{\varepsilon_0(i,2)\varepsilon_0(N-i,2)}+ \\
& +\sum_{i=1}^{2N-1}{(i+1)(i+2)\tilde{\varepsilon}_0(2N,i,2)}=S_1(N)+S_2(N)+S_3(N),
\end{align*} 
where $S_1(N)$, $S_2(N)$ and $S_3(N)$ are the first, the second and the third summands of the obtained expression, respectively.
 
By formula \eqref{eq:sum_i_i_eps} we conclude:
\begin{equation}
\label{eq:S3}
S_3(N)=\sum_{i=1}^{2N-1}{(i+1)(i+2)\tilde{\varepsilon}_0(2N,i,2)}=4^{N-2}N(N+1)(5N+7).
\end{equation}

Applying formula \eqref{eq:eps_0_2} we obtain:
\begin{align}
\label{eq:S2}
& S_2(N)=2\sum_{i=0}^{N-1}{\varepsilon_0(i,2)\varepsilon_0(N-i,2)}=
2\sum_{i=1}^{N-1}{i4^{i-1}\cdot (N-i)4^{N-i-1}}=\notag\\
& =2\cdot 4^{N-2}\left(\frac{N^2(N-1)}{2}-\frac{(N-1)N(2N-1)}{6}\right)=\frac{4^{N-2}}{3}(N-1)N(N+1).
\end{align}

Now consider $S_1(N)$. Since  $\varepsilon_0(N-i,3)=0$ for $i=N$, we have:
\begin{displaymath}
S_1(N)=2\sum_{i=0}^{N-1}{\varepsilon_0(i)\varepsilon_0(N-i,3)}=2\sum_{i=0}^{N}{\varepsilon_0(i)\varepsilon_0(N-i,3)}.
\end{displaymath}

Applying the induction assumption for $\varepsilon_0(N-i,3)$ and formula \eqref{eq:eps_B_0} we obtain:
\begin{align*}
 S_1(N)&=\sum_{i=0}^{N}{\frac{2(2i)!(8(N-i)+5)(N-i-1)(N-i)(N-i+1)}{i!(i+1)!\cdot 210}C_{2(N-i)+1}^{N-i}}=\\
& =\frac{1}{105}\sum_{i=0}^{N}{R_i(N)\frac{C_{2i+1}^{i}\cdot C_{2N-2i+1}^{N-i}}{2i+1}},
\end{align*} 
where
\begin{align*}
 R_i(N)&=(8(N-i)+5)(N-i-1)(N-i)(N-i+1)=\\
& =N(N+1)(N+2)(8N+13)-(i+1)N(N+1)(32N+31)+\\
&\phantom{=\ }+i(i+1)N(48N+15)-(i-1)i(i+1)(32N-11)+\\
&\phantom{=\ }+8(i-2)(i-1)i(i+1).
\end{align*} 
 
Applying formula \eqref{eq:D_k_N}, and, after that, formulas (\ref{eq:D_0}-\ref{eq:D_4}), we can write the following:
\begin{align*}
105 S_1(N)&=N(N+1)(N+2)(8N+13)D_0(N)-\\
&\phantom{=\ }-N(N+1)(32N+31)D_1(N)+N(48N+15)D_2(N)-\\
&\phantom{=\ }-(32N-11)D_3(N)+8D_4(N)=\\
& =N(N+1)(N+2)(8N+13)C_{2N+2}^{N}-\\
&\phantom{=\ }-N(N+1)(32N+31)\left(2\cdot 4^N-\frac{1}{2}C_{2N+2}^{N+1}\right)+\\
&\phantom{=\ }+N(48N+15)(N+1)\left(4^N-\frac{1}{2}C_{2N+2}^{N+1}\right)-\\
&\phantom{=\ }-(32N-11)N(N+1)\left(3\cdot 4^{N-1}-\frac{1}{2}C_{2N+2}^{N+1}\right)+\\
&\phantom{=\ }+8(N-1)N(N+1)\left(10\cdot 4^{N-2}-\frac{1}{2}C_{2N+2}^{N+1}\right).
\end{align*} 

Let's substitute in the first summand $C_{2N+2}^{N}$ by the following expression:
\begin{align*}
C_{2N+2}^{N}=\frac{1}{2}C_{2N+3}^{N+1}-\frac{1}{2}\cdot\frac{C_{2N+2}^{N+1}}{N+2}.
\end{align*} 
After this substitution all summands having the multiple $C_{2N+2}^{N+1}$ cancel on.
Transforming other summands we get the following formula:
\begin{align}
\label{eq:S1}
& S_1(N)=\frac{1}{210}N(N+1)(N+2)(8N+13)C_{2N+3}^{N+1}-\frac{4^{N-1}}{3}N(N+1)(4N+5).
\end{align}

Summing the expressions for $S_1(N)$, $S_2(N)$ and $S_3(N)$ (formulas \eqref{eq:S1}, \eqref{eq:S2}
and \eqref{eq:S3}) we obtain the following:
\begin{align*}
 \varepsilon_0(N+1,3)&=S_1(N)+S_2(N)+S_3(N)=\\
& =\frac{1}{210}N(N+1)(N+2)(8N+13)C_{2N+3}^{N+1}=\tilde{C}(N+1).
\end{align*}
\end{proof}

\section{Gluing together two polygons into a torus}

In this section we prove recurrence and explicit formulas for numbers $\varepsilon_1(N,2)$. 
Remind, that  $\varepsilon_1(N,2)$ is the number of marked maps on a torus (surface of genus 1), that contain $N$ edges and 2 faces.

\begin{theorem}
For $N\geq 3$ the numbers $\varepsilon_1(N,2)$ satisfy the following recursion:
\begin{multline}
\varepsilon_1(N,2)=2\sum_{i=0}^{N-3}{\Bigl(\varepsilon_0(i)\varepsilon_1(N-i-1,2)+
\varepsilon_0(i,2)\varepsilon_1(N-i-1)\Bigr)}+\\
+N(2N-1)\varepsilon_1(N-1)+\varepsilon_0(N-1,3).
\label{eq:eps_1_2_rec}
\end{multline}  
\end{theorem}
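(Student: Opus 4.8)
The plan is to apply the operation of deleting an edge to every marked map on a torus with $N\ge3$ edges and $2$ faces, exactly as in the proofs of Theorems~\ref{th:eps_0_2_rec} and~\ref{th:eps_0_3_rec}, and to collect the contributions of the four cases of that operation. The one genuinely new feature compared with the sphere is that now $g=1$, so case~\textbf{3} is no longer vacuous; it is this case that will produce the term $\varepsilon_0(N-1,3)$. Throughout I would freely use Lemmas~\ref{l1_1}--\ref{l1_4}.

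First I would dispose of the three ``direct'' cases. In case~\textbf{1} the two arcs of $\tilde e_1$ lie in different faces, so the result is a genus-$1$ map with $N-1$ edges and a single face; that unique face contains all $M=2N-2$ arcs, so Lemma~\ref{l1_1} (with $K=2$, hence $K-1=1$) gives each of the $\varepsilon_1(N-1)$ such maps exactly $\tfrac{(2N-1)(2N)}{2}=N(2N-1)$ times, producing the summand $N(2N-1)\varepsilon_1(N-1)$. In case~\textbf{2} Lemma~\ref{l1_2} gives each genus-$1$ two-face map with $N-1$ edges exactly twice, i.e.\ the contribution $2\varepsilon_1(N-1,2)$. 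In case~\textbf{3} the graph stays connected, so by Euler's formula the genus drops to $0$ and we obtain a genus-$0$ three-face map with $N-1$ edges; Lemma~\ref{l1_3} gives each such map once, for a total of $\varepsilon_0(N-1,3)$.

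The heart of the argument is case~\textbf{4}, where deleting $\tilde e_1$ disconnects the surface into an ordered pair of maps with $N-1$ edges, $3$ faces and genera summing to $1$, each pair obtained $C_{K-1}^{K_1-1}=C_1^{K_1-1}=1$ time by Lemma~\ref{l1_4} (note $C_1^{0}=C_1^{1}=1$). I would enumerate the four admissible splittings of $(\text{genus},\#\text{faces})$ between the first and second component, namely $(0,1)/(1,2)$, $(1,2)/(0,1)$, $(0,2)/(1,1)$ and $(1,1)/(0,2)$, writing the number of pairs whose first component has $i$ edges as the product $\varepsilon_{g_1}(i,K_1)\,\varepsilon_{g_2}(N-1-i,K_2)$. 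Reindexing $i\mapsto N-1-i$ identifies the first splitting with the second and the third with the fourth, so the four families collapse to $2\sum_i\varepsilon_0(i)\varepsilon_1(N-1-i,2)+2\sum_i\varepsilon_0(i,2)\varepsilon_1(N-1-i)$.

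Finally I would assemble the pieces. Since $\varepsilon_0(0)=1$, the case-\textbf{2} contribution $2\varepsilon_1(N-1,2)$ is precisely the $i=0$ term of the first convolution, and since $\varepsilon_0(0,2)=0$ the second convolution may be started at $i=0$ without change; together with the vanishing of $\varepsilon_1(N-1-i,2)$ and $\varepsilon_1(N-1-i)$ outside the relevant ranges (Remark~\ref{rem:zero}), this extends both sums to $\sum_{i=0}^{N-3}$ and gives the claimed identity. I expect the main obstacle to be the bookkeeping of case~\textbf{4}: one must verify that each ordered pair is produced exactly once (the factor $C_1^{K_1-1}$ equals $1$ for both $K_1=1$ and $K_1=2$), that the reindexing symmetry correctly merges the ``genus-$0$-first'' and ``genus-$1$-first'' families into the common factor $2$, and that the degenerate low-edge pairs (for instance $\varepsilon_1(2,2)=0$) do not disturb the summation limits.
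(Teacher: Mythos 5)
Your proposal is correct and follows essentially the same route as the paper: apply the edge-deletion operation, use Lemmas~\ref{l1_1}--\ref{l1_4} for the four cases (with case~\textbf{3} yielding the genus-drop term $\varepsilon_0(N-1,3)$), and absorb the case-\textbf{2} term as the $i=0$ summand using $\varepsilon_0(0)=1$, $\varepsilon_0(0,2)=0$ and $\varepsilon_1(2,2)=0$. The only cosmetic difference is that you organize case~\textbf{4} by listing the four ordered splittings and merging them by reindexing, whereas the paper folds the ordering into a factor $2$ from the start; the counts agree.
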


\begin{proof} Consider all marked maps on a torus, that contain $N>2$ edges and 2 faces and apply to each of them the operation of deleting an edge. For any map we have one of the following four cases.

\smallskip
\q1. \textit{Two arcs of the deleted edge~$\tilde e_1$ belong to different faces.} 

\noindent In this case we obtain a map on a torus with  $N-1$ edges and one face.  By lemma~\ref{l1_1} each of $\varepsilon_1(N-1)$ such maps is obtained $N(2N-1)$ times. Thus, case \textbf{1} occurs for  $N(2N-1)\varepsilon_1(N-1)$ initial maps.

\q2. \textit{Two arcs of the deleted edge are successive arcs of the face number~$1$.} 

\noindent
In this case we obtain a marked map on a torus with $N-1$ edges and two faces. By lemma~\ref{l1_2} each of $\varepsilon_1(N-1,2)$ such maps occurs twice. Thus, case~\textbf{2} occurs for  $2\varepsilon_1(N-1,2)$ initial maps.

\q3. \textit{Two arcs of the deleted edge are unsuccessive arcs of the face number~$1$. The obtained graph is connected.} 

\noindent In this case we obtain a marked map on a sphere with  $N-1$ edges and three faces. By lemma~\ref{l1_3} each of $\varepsilon_0(N-1,3)$ such maps occurs once. Thus, case \textbf{3} occurs for $\varepsilon_0(N-1,3)$ initial maps.

\q4. \textit{Two arcs of the deleted edge are unsuccessive arcs of the face number~$1$. The obtained graph is disconnected.} 

\noindent  In this case we obtain an ordered pair of maps, that contain together $N-1$ edges and three faces. By lemma~\ref{l1_4} each such pair occurs once. The maps in such pair   have sum of the genera~1, i.e.\ one of them is drawn on a torus and the other --- on a sphere. Moreover, one of maps contains one face and the other --- two faces. Thus we have two subcases:  the map of genus 0 in our pair has one face or it has two faces. Let's count the number of pairs for each subcase.

\q{4.1}. \textit{The map of genus 0  has one face.}  Let this map has $i$ edges. By remark~\ref{rem:zero} the other map contains at least 3 edges, consequently, $1\le i\le N-4$. Clearly, the number of such ordered pairs is $2\varepsilon_0(i)\varepsilon_1(N-i-1,2)$. Thus, case~\textbf{4.1} occurs for $2\sum_{i=1}^{N-4}{\varepsilon_0(i)\varepsilon_1(N-i-1,2)}$  initial maps.

\q{4.2}. \textit{The map of genus 0  has two faces.}  Let this map  has $i$ edges. By remark~\ref{rem:zero} the other map contains at least 2 edges, consequently, $1\le i\le N-3$.
Clearly, the number of such ordered pairs is $2\varepsilon_0(i,2)\varepsilon_1(N-i-1)$.  Thus, case~\textbf{4.2} occurs for $2\sum_{i=1}^{N-3}{\varepsilon_0(i,2)\varepsilon_1(N-i-1)}$ initial maps.

Summing the formulas for all these cases and taking into account that $\varepsilon _0(0)=1$ and $\varepsilon _0(0,2)=\varepsilon _1(2,2)=0$, 
we obtain the desired formula.
\end{proof}

\begin{theorem}
For $N\geq 1$ the numbers  $\varepsilon_1(N,2)$ satisfy the following formula:
\begin{equation}
\label{eq:eps_1_2}
\varepsilon_1(N,2)=4^{N-4}\frac{N(N-1)(N-2)(13N+3)}{3}.
\end{equation}
\end{theorem}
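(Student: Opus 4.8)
The plan is to prove the formula by strong induction on $N$, exactly in the style of Theorems~\ref{th:eps_0_2_form} and~\ref{th:eps_0_3}, by feeding the explicit values of all lower-order quantities into the recursion~\eqref{eq:eps_1_2_rec}. For $N\le 2$ both sides vanish: by Remark~\ref{rem:zero} one has $\varepsilon_1(N,2)=0$ for $N<K+2g-1=3$, and the claimed polynomial carries the factor $(N-1)(N-2)$, so it vanishes too. Hence it suffices to carry out the induction step for $N\ge 3$, assuming $\varepsilon_1(k,2)=4^{k-4}\frac{k(k-1)(k-2)(13k+3)}{3}$ for all $k<N$ and deriving the same expression for $N$ (note that at $N=3$ the convolution sums are empty or trivial, and $15\cdot\varepsilon_1(2)+\varepsilon_0(2,3)=15+6=21$ already matches).

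First I would record the substitutions that bring the four groups of summands of~\eqref{eq:eps_1_2_rec} into the normal form to which the formulas of Section~1.3 apply. From~\eqref{eq:eps_1} one gets $\varepsilon_1(m)=\frac{(m+1)m(m-1)}{12}\varepsilon_0(m)=\frac{p_3(m)}{12}\varepsilon_0(m)$, from~\eqref{eq:eps_0_2} one has $\varepsilon_0(i,2)=i\,4^{i-1}$, and $\varepsilon_0(i)=\frac{(2i)!}{i!(i+1)!}$ by~\eqref{eq:eps_B_0}. Splitting the right-hand side of~\eqref{eq:eps_1_2_rec} as $U_1+U_2+U_3+U_4$, where $U_1=2\sum_i\varepsilon_0(i)\varepsilon_1(N-i-1,2)$, $U_2=2\sum_i\varepsilon_0(i,2)\varepsilon_1(N-i-1)$, $U_3=N(2N-1)\varepsilon_1(N-1)$ and $U_4=\varepsilon_0(N-1,3)$, the terms $U_3$ and $U_4$ are already explicit pure-binomial expressions through~\eqref{eq:eps_1} and~\eqref{eq:eps_0_3}, so the two convolutions $U_1,U_2$ are the objects to be evaluated. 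Since $U_1$ and $U_2$ are convolutions of a Catalan factor with a $4^m$-times-polynomial factor (not of two Catalan factors), they are handled by the $A$-formulas, as in the proof of Theorem~\ref{th:eps_0_2_form} and in Lemma~\ref{lem:sum_i_eps}, rather than by the $D_k$-formulas.

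Both convolutions reduce to the sums $A(M,k,k)$ and $A(M,0,-1)$ as follows. In $U_1$ I would insert the induction hypothesis for $\varepsilon_1(N-i-1,2)$, factor out the power of $4$, and write $U_1=\frac{2\cdot 4^{N-5}}{3}\sum_i\frac{\varepsilon_0(i)}{4^i}Q_N(i)$ with $Q_N$ a degree-four polynomial in $i$; in $U_2$, after the substitution $j=N-i-1$ and $\varepsilon_1(N-i-1)=\frac{p_3(N-i-1)}{12}\varepsilon_0(N-i-1)$, one gets $U_2=\frac{4^{N-2}}{6}\sum_j(N-1-j)\,p_3(j)\frac{\varepsilon_0(j)}{4^j}$. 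The key identity is $\frac{\varepsilon_0(j)}{4^j}\,p_{k+1}(j)=\frac{(2j)!}{4^j\,j!\,(j-k)!}$, so that after expanding $Q_N(i)$ and $(N-1-j)p_3(j)$ in the descending-factorial basis $1,p_1,\dots,p_4$, every resulting sum is an $A(\,\cdot\,,k,k)$ (for the $p_{k+1}$-parts, by~\eqref{eq:A_k_k}) or an $A(\,\cdot\,,0,-1)$ (for the constant part, by~\eqref{eq:A_0_1}). Evaluating these closed forms turns $U_1$ and $U_2$ each into the sum of a ``pure'' part proportional to $4^{N}$ times a polynomial in $N$ and a ``binomial'' part carrying a factor of the shape $\frac{(2M+2)!}{4^{M+1}((M+1)!)^2}$ with $M$ equal to the upper summation limit.

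The main obstacle is the final cancellation. The target value is a pure $4^{N}$-times-polynomial with no binomial term, whereas $U_3$ and $U_4$ are entirely of binomial type and $U_1,U_2$ each contribute binomial pieces; hence all binomial contributions of $U_1,U_2,U_3,U_4$ must cancel identically, exactly as the half-binomials $\tfrac12 C_{2N+2}^{N+1}$ cancelled in the proof of~\eqref{eq:D_4} and in Theorem~\ref{th:eps_0_3}. To make this transparent I would rewrite $\varepsilon_1(N-1)$, $\varepsilon_0(N-1,3)$ and all the $A$-outputs as multiples of the single quantity $\varepsilon_0(N-1)=\frac{(2N-2)!}{(N-1)!\,N!}$ (shifting the binomials coming from the two different upper limits in $U_1$ and $U_2$ to this common form), verify that the total coefficient of $\varepsilon_0(N-1)$ is zero, and then confirm that the surviving $4^{N}$-polynomial part of $U_1+U_2$ equals $4^{N-4}\frac{N(N-1)(N-2)(13N+3)}{3}$. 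This bookkeeping---tracking the degree-four polynomial coefficients through the index shifts and the expansion in the $p_k$-basis---is where all the real effort lies; the structural reduction to the $A$-formulas is routine once the substitutions above are in place.
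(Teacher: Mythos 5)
Your plan is correct and coincides with the paper's own proof: the paper likewise proceeds by induction via the recursion~\eqref{eq:eps_1_2_rec}, substitutes the explicit formulas \eqref{eq:eps_B_0}, \eqref{eq:eps_0_2}, \eqref{eq:eps_1}, \eqref{eq:eps_0_3}, expands the convolution weights in the descending-factorial basis $1,p_1,\dots,p_4$, evaluates the resulting sums by \eqref{eq:A_0_1} and \eqref{eq:A_k_k} for $k=0,\dots,3$, and checks that all binomial-type terms cancel, leaving the pure $4^N$-polynomial part (which indeed comes entirely from your $U_1$, the paper's $S_1$). The only differences are cosmetic: the paper groups your $U_3+U_4$ into a single term $S_3$ and writes the cancelling binomial pieces over the common denominator $\frac{(2N-1)!}{420(N-1)!(N-3)!}$ rather than as multiples of $\varepsilon_0(N-1)$.
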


\begin{proof}
Let $F(N)=4^{N-4}\frac{N(N-1)(N-2)(13N+3)}{3}$. We will prove the statement $\varepsilon_1(N,2)=F(N)$ by induction. 

The base for $N=1,2$ immediately follows from remark~\ref{rem:zero}, since $\varepsilon_1(1,2)=F(1)=\varepsilon_1(2,2)=F(2)=0$. Let's prove the induction step. Let for all $k<N$ we have $\varepsilon_1(k,2)=F(k)$. Then we will show that $\varepsilon_1(N,2)=F(N)$. By formula \eqref{eq:eps_1_2_rec} we have:
\begin{align*}
 \varepsilon_1(N,2)&=2\sum_{i=0}^{N-3}{\varepsilon_0(i)\varepsilon_1(N-i-1,2)}+
2\sum_{i=0}^{N-3}{\varepsilon_0(i,2)\varepsilon_1(N-i-1)}+\\
& +\Bigl(N(2N-1)\varepsilon_1(N-1)+\varepsilon_0(N-1,3)\Bigr)=S_1(N)+S_2(N)+S_3(N),
\end{align*}
where $S_1(N)$, $S_2(N)$, $S_3(N)$~--- are the first, the second and the third summands of the obtained expression respectively.

At first let's calculate $S_3(N)$. By formulas \eqref{eq:eps_1} and \eqref{eq:eps_0_3} we have:
\begin{align}
\label{eq:S3_tor}
 S_3(N)&=N(2N-1)\varepsilon_1(N-1)+\varepsilon_0(N-1,3)=\notag\\
& =\frac{N(2N-1)(2N-2)!}{12(N-1)!(N-3)!}+\frac{(8(N-1)+5)(N-2)(N-1)N}{210}C_{2N-1}^{N-1}=\notag\\
& =\frac{(51N-6)(2N-1)!}{420(N-1)!(N-3)!}.
\end{align}

Now, taking into account $\varepsilon_0(0,2)=0$, let's modify the sum~$S_2(N)$. Let~$j=N-1-i$. By formulas \eqref{eq:eps_0_2}, \eqref{eq:eps_1} and formula~\eqref{eq:A_k_k} for $k=2,3$ we have:

\begin{align}
\label{eq:S2_tor}
& S_2(N)=2\sum_{i=1}^{N-3}{\varepsilon_0(i,2)\varepsilon_1(N-i-1)}=2\sum_{j=2}^{N-2}{\varepsilon_0(N-1-j,2)\varepsilon_1(j)}=\notag\\
& =2\sum_{j=2}^{N-2}{(N-1-j)4^{N-1-j-1}\frac{(2j)!}{12j!(j-2)!}}=\notag\\
& =\frac{2^{2N-5}}{3}\left((N-3)\sum_{j=2}^{N-2}{\frac{(2j)!}{4^j j!(j-2)!}}-
\sum_{j=3}^{N-2}{\frac{(2j)!}{4^j j!(j-3)!}}\right)=\notag\\
& =\frac{2^{2N-5}}{3}\left((N-3)\frac{2(N-3)(N-2)(N-1)(2N-2)!}{5\cdot 4^{N-1}(N-1)!(N-1)!}-\right.\notag\\
& \left.\phantom{=\frac{2^{2N-5}}{3}\left(\right.}-\frac{2(N-4)(N-3)(N-2)(N-1)(2N-2)!}{7\cdot 4^{N-1}(N-1)!(N-1)!}\right)=\notag\\
& =\frac{(N-3)(2N-1)!}{420(N-1)!(N-3)!}.
\end{align}

Now let's modify $S_1(N)$. By the induction assumption:
\begin{multline*}
 \varepsilon_1(N-1-i,2)=\\
=(N-i-1)(N-i-2)(N-i-3)(13N-13i-10)\frac{4^{N-i-5}}{3}=\\
=s_i(N)\frac{2^{2N-10}}{3\cdot 4^i},
\end{multline*}
where
\begin{align*}
 s_i(N)&=(N-i-1)(N-i-2)(N-i-3)(13N-13i-10)=\\
& =N(N-1)(N-2)(13N+3)-(N-1)(N-2)(52N-30)(i+1)+\\
&\phantom{=\ }+3(N-2)(26N-36)i(i+1)-(52N-114)(i-1)i(i+1)+\\
&\phantom{=\ }+13(i-2)(i-1)i(i+1).
\end{align*}

Taking into account the notation defined above, formula~\eqref{eq:eps_B_0} and that $\varepsilon_1(N-1-i,2)=0$ for $i=N-3$, we obtain:
\begin{align*}
& S_1(N)=2\sum_{i=0}^{N-4}{\varepsilon_0(i)\varepsilon_1(N-i-1,2)}=
\frac{2^{2N-9}}{3}\sum_{i=0}^{N-4}{\frac{(2i)!}{4^i i!(i+1)!}s_i(N)}. 
\end{align*}

By the formula for~$s_i(N)$, formula~\eqref{eq:A_0_1} and formula~\eqref{eq:A_k_k} for $k=0,\ldots,3$ we have:
\begin{align*}
 \frac{3S_1(N)}{2^{2N-9}}&=N(N-1)(N-2)(13N+3)\sum_{i=0}^{N-4}{\frac{(2i)!}{4^i i!(i+1)!}}-\\
&\phantom{=\ } -(N-1)(N-2)(52N-30)\sum_{i=0}^{N-4}{\frac{(2i)!}{4^i i!i!}}+\\
&\phantom{=\ } +3(N-2)(26N-36)\sum_{i=1}^{N-4}{\frac{(2i)!}{4^i i!(i-1)!}}-\\
&\phantom{=\ } -(52N-114)\sum_{i=2}^{N-4}{\frac{(2i)!}{4^i i!(i-2)!}}+
13\sum_{i=3}^{N-4}{\frac{(2i)!}{4^i i!(i-3)!}}=\\
& =N(N-1)(N-2)(13N+3)\left(2-\frac{2(N-2)(2N-6)!}{4^{N-3}(N-3)!(N-2)!}\right)-\\
&\phantom{=\ } -(N-1)(N-2)(52N-30)\frac{2(N-3)(2N-6)!}{4^{N-3}(N-3)!(N-3)!}+\\
&\phantom{=\ } +3(N-2)(26N-36)\frac{2(N-4)(N-3)(2N-6)!}{3\cdot 4^{N-3}(N-3)!(N-3)!}-\\
&\phantom{=\ } -(52N-114)\frac{2(N-5)(N-4)(N-3)(2N-6)!}{5\cdot 4^{N-3}(N-3)!(N-3)!}+\\
&\phantom{=\ } +13\frac{2(N-6)(N-5)(N-4)(N-3)(2N-6)!}{7\cdot 4^{N-3}(N-3)!(N-3)!}=\\
& =2N(N-1)(N-2)(13N+3)-\frac{2(N-2)(N-1)(2N-6)!}{35\cdot 4^{N-3}(N-3)!(N-1)!}R(N),
\end{align*} 
where
\begin{multline*}
 R(N)=35N(N-1)(N-2)(13N+3)+\\
 +35(N-1)(N-2)(N-3)(52N-30)-\\
 -35(N-2)(N-3)(N-4)(26N-36)+\\
 +7(N-3)(N-4)(N-5)(52N-114)-\\
 -5\cdot 13(N-3)(N-4)(N-5)(N-6)=\\
 =4(52N-9)(2N-5)(2N-3)(2N-1).
\end{multline*}
 
Whence it follows that
\begin{equation}
\label{eq:S1_tor}
 S_1(N) =\frac{4^{N-4}}{3}N(N-1)(N-2)(13N+3)-\frac{(52N-9)(2N-1)!}{420(N-3)!(N-1)!}.
\end{equation}
 
Summing the expressions for~$S_1(N)$, $S_2(N)$ and $S_3(N)$ (formulas \eqref{eq:S1_tor}, 
\eqref{eq:S2_tor} and \eqref{eq:S3_tor}), we obtain:
\begin{align*}
 \varepsilon_0(N,3)&=S_1(N)+S_2(N)+S_3(N)=\\
& =\frac{4^{N-4}}{3}N(N-1)(N-2)(13N+3)=F(N).
\end{align*}
\end{proof}

\end{document}